\documentclass[12pt]{amsart}

\usepackage{amsmath,amssymb,mathrsfs}

\usepackage{amsthm}

\usepackage{amsthm} 
\usepackage{amsfonts}
\usepackage{eucal}

\numberwithin{equation}{section}

\newtheorem{thm}{Theorem}[section]
\newtheorem{lemma}[thm]{Lemma}

\newtheorem*{proof-mthm}{Proof of Main Theorem}

\newcommand{\C}{{\mathbb C}}

 \newcommand{\blem}{\begin{lemma}}
 \newcommand{\elem}{\end{lemma}}
\newcommand{\bpve}{\begin{proof}}
\newcommand{\epve}{\end{proof}}

\def\C{\mathbb{C}}

\def\Re{{\text{\rm \,Re\,}}}

\usepackage{amssymb}
\usepackage{amsmath}
\usepackage{amsfonts}
\usepackage[colorlinks=true,linkcolor=red]{hyperref}
\newtheorem{theorem}{Theorem}
\theoremstyle{plain}

\newtheorem{proposition}{Proposition}

\numberwithin{equation}{section}

\numberwithin{theorem}{section}
\numberwithin{proposition}{section}

\begin{document}

\title{Berezin Transform Commutators}

\author[Borichev]{Alexander Borichev}

\address{Aix-Marseille University, CNRS, I2M, Marseille,
France}

\email{alexander.borichev@math.cnrs.fr}

\author[Fantolini]{G\'erard Fantolini}

\address{Aix-Marseille University, CNRS, I2M, Marseille,
France}

\email{fantolinigerard@gmail.com}

\author[Youssfi]{El-Hassan Youssfi}

\address{Aix-Marseille University, CNRS, I2M, Marseille,
France}

\email{el-hassan.youssfi@univ-amu.fr}

\keywords{Reproducing kernel; Fock space; Berezin transform; commutator.}

\begin{abstract}
We study commutators of Berezin transforms $\mathcal B_\alpha$ associated with radial weighted Fock spaces 
with exponential weights $e^{-\alpha |z|^{m}}$ on the complex plane. 
We prove that when $m>2$, the commutator $[\mathcal B_\alpha,\mathcal B_\beta]$ is compact  on the space 
$L^{2}(\mathbb C, dA)$ whenever $\alpha,\beta>0$. 
We further show that if $m>2$ and $\alpha\neq\beta$, then the commutator is never of finite rank.  
\end{abstract}

\maketitle

\section{Introduction and statement of the main results}
The spectral theory of operators associated with reproducing kernel
Hilbert spaces plays a central role in modern analysis. In the setting
of Fock type spaces, the Berezin transform provides a natural bridge
between function-theoretic properties of kernels and operator-theoretic
properties such as boundedness and compactness.
Such operators play a fundamental role in 
quantization theory and operator theory on reproducing kernel Hilbert spaces 
\cite{B,E1,E2}. 
On Fock spaces and related weighted spaces, they are closely connected to 
Hankel operators and kernel asymptotics 
\cite{BY,CF,SY,Z}. 

Classical work \cite{Z} of Zhu on Fock spaces established a systematic study of 
the Berezin transform and its boundedness and compactness properties.
Recently, more refined kernel estimates and geometric techniques have
been developed in the study of weighted Fock spaces and reproducing
kernels, notably in the author's work \cite{BFY}, see also the paper \cite{Tian} dealing with the several complex variables case.

Motivated by these developments, we investigate the spectral behaviour
of commutators of Berezin transforms in weighted radial Fock type
spaces. 

In this paper we consider radial weights of the form 
$e^{-\alpha |z|^{m}}$ on $\mathbb C$. 
We prove that when $m\geq 2$, the commutator $[\mathcal B_\alpha,\mathcal B_\beta]$ 
is always compact, and we establish a complementary result showing that 
for $\alpha\neq\beta$ the commutator is never of finite rank unless when $m=2$, extending the result of \cite{BFY}. 

Let $m\ge 2$. Given $\alpha>0$, we consider the generalized Fock space $\mathcal F^m_\alpha$,
$$
\mathcal F^m_\alpha=\bigl\{ F\in\mathrm{Hol}(\mathbb C):\|F\|^2_{\mathcal F^m_\alpha}=\varkappa_{m,\alpha}\int_{\mathbb C}|F(z)|^2e^{-\alpha |z|^{m}}\,dA(z)<\infty\bigr\}, 
$$
where $dA$ is the planar Lebesgue measure, and $\varkappa_{m,\alpha}=\frac{m\alpha^{2/m}}{2\pi\Gamma(2/m)}$ is the normalizing factor. The corresponding Berezin transform $\mathcal B_\alpha=\mathcal B_{m,\alpha}$ is defined via the
reproducing kernel \(K_\alpha(z,w)\) of the space $\mathcal F^m_\alpha$, by the formula 
\[
\mathcal B_\alpha f(z)=\frac{\varkappa_{m,\alpha}}{K_\alpha(z,z)}
\int_{\mathbb C} f(w) |K_\alpha(z,w)|^2
e^{- \alpha |w|^m}\,dA(w).
\]

Throughout the paper we study the operator properties of the commutator
$$
[\mathcal B_\alpha,\mathcal B_\beta]= \mathcal B_\alpha \mathcal B_\beta- \mathcal B_\beta \mathcal B_\alpha.
$$
We start by showing that this operator acts boundedly on the Hilbert space
$L^2(\mathbb C, dA)$. 

Our first main result is as follows:

\begin{theorem}\label{thm:main} Let  $m\geq 2$ and $ \alpha, \beta >0$.
Then the commutator $[\mathcal B_\alpha,\mathcal B_\beta]$ is compact on $L^{2}(\mathbb C, dA)$. 
\end{theorem}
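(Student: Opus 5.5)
The plan is to split $[\mathcal B_\alpha,\mathcal B_\beta]$ into a part living near the origin, which will be Hilbert--Schmidt, and a part living near infinity, whose operator norm tends to $0$. Compactness then follows because the compact operators are norm closed. For $m=2$ nothing needs to be done: $\mathcal B_\alpha,\mathcal B_\beta$ are convolutions with Gaussians, so they commute and the commutator vanishes. Assume $m>2$ and set $\rho(z)=(1+|z|)^{1-m/2}$. This is the natural length scale of the weight $\alpha|z|^m$, since $\Delta(\alpha|z|^m)\asymp|z|^{m-2}$, and $\rho(z)\to 0$ as $|z|\to\infty$.

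\textbf{Step 1 (kernel estimates).} I would take as input the known asymptotics for the reproducing kernel of $\mathcal F^m_\alpha$, in the form used in \cite{BFY}. Write $k_\alpha(z,w)=\varkappa_{m,\alpha}|K_\alpha(z,w)|^2e^{-\alpha|w|^m}/K_\alpha(z,z)$ for the Berezin kernel. The two estimates needed are:
\begin{itemize}
\item[(a)] an off-diagonal bound $k_\alpha(z,w)\lesssim \rho(z)^{-2}e^{-c|z-w|/\rho(z)}$, valid in the range where $\rho(z)\asymp\rho(w)$, together with enough decay elsewhere to make Schur tests work;
\item[(b)] a near-diagonal Gaussian approximation
\[
k_\alpha(z,w)=\frac{\alpha\mu(z)}{\pi}\,e^{-\alpha\mu(z)|z-w|^2}\bigl(1+O(|z|^{-m/2})\bigr),\qquad |z-w|\le M\rho(z)\sqrt{\log|z|},
\]
where $\mu(z)=\tfrac{m^2}{4}|z|^{m-2}$, with the error term itself varying slowly in $z$.
\end{itemize}
From (a) and the Schur test with test function $1$, both $\mathcal B_\alpha$ and $\mathcal B_\beta$ are bounded on $L^2$. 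Here one uses that $\int k_\alpha(z,w)\,dA(z)$ is bounded, since $\rho(z)\asymp\rho(w)$ on the effective support.

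\textbf{Step 2 (localization).} Let $\chi_R$ be the indicator of the disc $|z|<R$. The kernels are bounded on compact sets and decay fast off the diagonal. Hence $\chi_R\mathcal B_\alpha$, and more generally $\chi_R T$ for any composition $T$ of the two transforms, is Hilbert--Schmidt: its kernel is square-integrable over $\{|z|<R\}\times\mathbb C$. The same holds with $\chi_R$ on the right. It therefore suffices to show
\[
\|(1-\chi_R)[\mathcal B_\alpha,\mathcal B_\beta](1-\chi_R)\|\to 0 \quad\text{as } R\to\infty,
\]
where we also use the fast off-diagonal decay to discard cross terms.

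\textbf{Step 3 (comparison with the model).} For $|z|,|w|>R$, compute the kernel of $\mathcal B_\alpha\mathcal B_\beta$ as $\int k_\alpha(z,u)k_\beta(u,w)\,dA(u)$, and similarly for the other order.
\begin{itemize}
\item Replace each kernel by its Gaussian model from (b). Freeze the parameter $\mu$ at the point $z$. The cost of this is $|\mu(u)-\mu(z)|/\mu(z)\lesssim |u-z|/|z|\lesssim \rho(z)/|z|=O(|z|^{-m/2})$.
\item With frozen parameters, the two Gaussian convolutions commute exactly, and the composite in either order is the Gaussian with variance $\tfrac{1}{\alpha\mu}+\tfrac1{\beta\mu}$.
\item The difference kernel is therefore bounded by $\varepsilon(R)\,\rho(z)^{-2}e^{-c|z-w|/\rho(z)}$, with $\varepsilon(R)=O(R^{-m/2})+o(1)$ coming from the tails outside the Gaussian window, which are controlled by (a).
\item The Schur test with test function $1$ then gives norm $\lesssim\varepsilon(R)\to0$.
\end{itemize}
This completes the argument.

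The main obstacle is Step 1(b): obtaining a uniform near-diagonal asymptotic for $|K_\alpha(z,w)|^2e^{-\alpha|w|^m}/K_\alpha(z,z)$ with a relative error that tends to zero. For general $m$ the kernel is a Mittag-Leffler type function $\sum_n (z\bar w)^n/\|z^n\|^2$. I would first try to get this either from the asymptotics of $E_{2/m}$ (Laplace method on the series $\sum_n$), or from a local comparison with the Bargmann kernel for the local harmonic approximation of $\alpha|z|^m$ via H\"ormander-type $\bar\partial$ estimates. Whichever is quicker, the important point is to keep track of the phase $\arg$ so that the modulus squared is genuinely Gaussian up to $1+o(1)$.
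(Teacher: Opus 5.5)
Your argument is correct. Its skeleton matches the paper's: an interior Hilbert--Schmidt piece, an off-diagonal piece made small by kernel decay, and a near-diagonal piece at infinity that is small because a local model commutes exactly. The two key technical steps, however, are carried out differently.

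The paper does not freeze coefficients. From the Mittag-Leffler asymptotics it gets $B_\alpha(z,w)\approx C(m,\alpha)|w|^{m-2}e^{-\alpha|z^{m/2}-w^{m/2}|^2}$. In other words, up to exponentially small errors the Berezin kernel is the pull-back of a flat Gaussian under the local conformal map $u\mapsto\tilde u=u^{m/2}$, which flattens the weight since $|u|^m=|\tilde u|^2$. In the variable $\tilde u$ the Jacobian $|u|^{m-2}$ is absorbed exactly. The whole-plane integral of the model $G_{\alpha,\beta}$ therefore vanishes identically, and the only losses are truncation tails.

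Your (b) is what one gets from that formula by linearizing $z^{m/2}-w^{m/2}\approx\frac m2 z^{m/2-1}(z-w)$ and then freezing $\mu$ at $z$. This costs a relative error. On your window $|z-w|\le M\rho(z)\sqrt{\log|z|}$ that error is really $O(|z|^{-m/2}(\log|z|)^{3/2})$ rather than $O(|z|^{-m/2})$, because the exponent $\beta\mu|u-w|^2$ can be of size $\log|z|$. It is still $o(1)$, so this is harmless.

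The decompositions also differ. The paper cuts at a fixed intrinsic distance $M$ and makes $\|\mathcal C^M_{\alpha,\beta}\|$ small by the Schur test for $L^M$. It then lets $R\to\infty$ for the piece with $d_\alpha\le M$, $|z|\ge R$, and handles a compactly supported remainder separately. Your $\sqrt{\log|z|}$ window makes the off-window tail $e^{-cM\sqrt{\log|z|}}$ tend to $0$ together with $R$, even with only the exponential decay in your (a). So a single limit suffices, and your one-sided pieces $\chi_R\mathcal C_{\alpha,\beta}$, $\mathcal C_{\alpha,\beta}\chi_R$ are Hilbert--Schmidt simply because $\chi_R\mathcal B_\alpha$ is.

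What each route buys: the paper exploits the special structure of $|z|^m$ to get exact cancellation with minimal bookkeeping. Yours uses only a near-diagonal Bergman-kernel asymptotic with a local scale that varies slowly on itself ($\rho(z)/|z|\to0$). It would therefore adapt to other weights for which such asymptotics are available but no flattening holomorphic map exists.
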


We should point out that the Berezin transform  $\mathcal B_\alpha$  is  bounded  but never compact on  $L^{2}(\mathbb C,  dA(z))$. This fact will be proved herein. 

Our second main result deals with the finite rank property of the commutator.

\begin{theorem}\label{thm:main2}
 Suppose that $m>0$.  Then the following are equivalent:
\begin{itemize}
\item[(1)]
 $[\mathcal B_\alpha,\mathcal B_\beta]$ is of finite rank for all $\alpha, \beta >0$.
 \item[(2)]
 $[\mathcal B_\alpha,\mathcal B_\beta]$ is of finite rank for some $\alpha, \beta >0$ such that $\alpha\not = \beta$.
 \item[(3)]  $[\mathcal B_\alpha,\mathcal B_\beta]=0$ for all $\alpha, \beta >0$.
 \item[(4)]  $[\mathcal B_\alpha,\mathcal B_\beta]=0$ for some $\alpha, \beta >0$ such that $\alpha\not = \beta$. 
\item[(5)] $m=2$. 
 \end{itemize}
\end{theorem}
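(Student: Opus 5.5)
Because both Berezin transforms are rotation invariant, the plan is to decompose $L^2(\mathbb C,dA)$ into angular Fourier modes and reduce the problem to a radial question. Write $f(re^{i\theta})=\sum_{k\in\mathbb Z} f_k(r)e^{ik\theta}$. Since $K_\alpha(z,w)=\sum_{n\ge0} z^n\bar w^n/c_n(\alpha)$ with $c_n(\alpha)=\varkappa_{m,\alpha}\int|w|^{2n}e^{-\alpha|w|^m}dA$, we get
\[
|K_\alpha(z,w)|^2=\sum_{n,j}\frac{(z\bar w)^n(\bar z w)^j}{c_n(\alpha)c_j(\alpha)}.
\]
Hence $\mathcal B_\alpha$ maps the $k$-th mode to itself, acting on $f_k$ by an explicit radial integral operator $\mathcal B_\alpha^{(k)}$. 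The commutator vanishes (resp. has finite rank) if and only if every $[\mathcal B_\alpha^{(k)},\mathcal B_\beta^{(k)}]$ vanishes (resp. all but finitely many vanish and the remaining ones have finite rank). The implications (3)$\Rightarrow$(1)$\Rightarrow$(2) and (3)$\Rightarrow$(4)$\Rightarrow$(2) are trivial. For (5)$\Rightarrow$(3), I would use the classical fact that for $m=2$ one has $\mathcal B_\alpha=e^{\Delta/(4\alpha)}$ up to normalization, a Gaussian convolution. All of these operators are Fourier multipliers, so they commute. The real content is therefore (2)$\Rightarrow$(5).

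For (2)$\Rightarrow$(5), I would test the commutator against a family of explicit functions on which everything is computable. The natural choice is radial monomials $f_s(w)=|w|^{s}$ and, more generally, $w^k|w|^{s}$, or exponentials $e^{t|w|^m}$ for small $t$ (when $m>2$ these are no longer in $L^2$, but they can be used on a dense-domain or distributional level). For radial $f$,
\[
\mathcal B_\alpha f(z)=\frac{\varkappa_{m,\alpha}}{K_\alpha(z,z)}\sum_n \frac{|z|^{2n}}{c_n(\alpha)^2}\int f(w)|w|^{2n}e^{-\alpha|w|^m}dA(w),
\]
and with $f=|w|^s$ every term is a ratio of Gamma functions $\Gamma((2n+2+s)/m)/\Gamma((2n+2)/m)$ times $\alpha^{-s/m}$. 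I would then compute the large-$|z|$ asymptotic expansion of $\mathcal B_\alpha$ acting on the $k$-th mode. The expected form is $\mathcal B_\alpha \approx I + \alpha^{-1}L_1 + \alpha^{-2}L_2+\cdots$, where $L_j$ are differential operators with coefficients depending on $|z|^{m-2}$-type factors, coming from a Laplace-method expansion around $w=z$ with Gaussian width $\asymp |z|^{1-m/2}/\sqrt{\alpha}$. One then checks that $[\mathcal B_\alpha,\mathcal B_\beta]$ has a nonzero leading term $(\alpha^{-1}\beta^{-2}-\alpha^{-2}\beta^{-1})[L_1,L_2]$, and this coefficient is nonzero exactly when $\alpha\ne\beta$. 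The commutator $[L_1,L_2]$ vanishes identically only when the coefficients are constant, that is, when $m=2$.

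To upgrade ``commutator nonzero'' to ``not finite rank'', I would use the mode decomposition. It suffices to show that $[\mathcal B_\alpha^{(k)},\mathcal B_\beta^{(k)}]\neq0$ for infinitely many $k$. The asymptotic computation above should be uniform in $k$ (or valid for all $k$ with $k$-dependent lower-order terms), giving a nonzero leading coefficient for every $k$ when $m\ne2$. Each mode then contributes at least rank one, and the modes are mutually orthogonal, so the total rank is infinite.

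The main obstacle is the asymptotic step. It requires a sufficiently precise expansion of $K_\alpha(z,z)$ (e.g. $K_\alpha(z,z)\sim C|z|^{m-2}e^{\alpha|z|^m}(1+O(|z|^{-m}))$) and of the off-diagonal kernel, and it requires verifying that the resulting second-order commutator coefficient genuinely does not vanish. I would do the expansion in the variable $x=|z|^m$ using the Gamma-ratio sums (Stirling plus Laplace method in $n$). This is more robust than working with the kernel directly, and it lets me track the exact dependence on $\alpha$, $\beta$, $m$, $k$ and isolate a coefficient proportional to $(\alpha-\beta)(m-2)$ times a nonzero quantity.
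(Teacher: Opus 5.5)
Your reductions are sound: the trivial implications, $(5)\Rightarrow(3)$ (for $m=2$, $B_\alpha(z,w)=\frac{\alpha}{\pi}e^{-\alpha|z-w|^2}$), and the fact that $\mathcal C_{\alpha,\beta}$ commutes with rotations. Because of that commutation, its rank is the sum of the ranks of its restrictions to the angular modes. The gap is at the heart of $(2)\Rightarrow(5)$: the nonzero leading term $(\alpha^{-1}\beta^{-2}-\alpha^{-2}\beta^{-1})[L_1,L_2]$ does not exist. The metric $ds_\alpha^2=\alpha|d(z^{m/2})|^2$ is flat off the origin. Writing $\tilde z=z^{m/2}$, \eqref{kkk} says exactly that, up to errors $O(e^{-c(|z|^m+|w|^m)})$,
\[
B_\alpha(z,w)\,dA(w)=\frac{\alpha}{\pi}\,e^{-\alpha|\tilde z-\tilde w|^2}\,dA(\tilde w),
\]
which is the Euclidean heat kernel at time $1/(4\alpha)$ in the variable $\tilde z$. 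So the large-$|z|$ expansion you plan to compute is $\mathcal B_\alpha\sim\exp(\alpha^{-1}L_1)$ with $L_1=m^{-2}|z|^{2-m}\Delta$, that is, $L_j=L_1^j/j!$. All the $L_j$ commute, so every term $\alpha^{-i}\beta^{-j}[L_i,L_j]$ of the commutator vanishes. This holds in every mode $k$, since the mode only changes the angular part of $\Delta$. Your claim that non-constant coefficients force $[L_1,L_2]\neq 0$ is false here, because $L_2$ is a function of $L_1$. This is precisely the cancellation behind \eqref{t5} and Lemma~\ref{lem:near-diag}, and it persists to all orders. Your Stirling/Laplace computation in $x=|z|^m$ will therefore return zero at every order, and there is no coefficient proportional to $(\alpha-\beta)(m-2)$ to isolate.

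What is missing is that the non-commutativity is a global effect of the vertex. The metric $|z|^{m-2}|dz|^2$ is a flat cone of total angle $m\pi\neq 2\pi$ with vertex at $0$, so the obstruction has to be read off at the origin, not at infinity. The paper does this with $F_{N,\delta}(z)=z^Ne^{-\delta|z|^m}$:
\begin{itemize}
\item the exact Gamma-moment series give $\lim_{z\to0}z^{-N}\mathcal B_\alpha\mathcal B_\beta F_{N,\delta}(z)$;
\item letting $\delta\to\infty$ in \eqref{kfin} gives $V_{\alpha,\beta}(N)=V_{\beta,\alpha}(N)$;
\item the lemmas of \cite{BFY} then produce the contradiction.
\end{itemize}
Your mode reduction fits this well. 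Note, however, that finite rank only forces the mode restrictions to vanish for all but finitely many $N$, so you need the obstruction for infinitely many $N$. That obstruction is available directly. By the diagonal asymptotics of $K_\alpha(z,z)$ (cf.\ \eqref{k2}), for large $N$ the integrals defining $V_{\alpha,\beta}(N)$ and $V_{\beta,\alpha}(N)$ concentrate where $|z|^m\approx 2N/(m(\alpha+\beta))$. As a result, $V_{\alpha,\beta}(N)/V_{\beta,\alpha}(N)\to(\alpha/\beta)^{2/m-1}$, which is $\neq 1$ whenever $m\neq 2$ and $\alpha\neq\beta$.
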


Since for $m=2$, the commutator vanishes, from now on we deal only with the case $m\not=2$. Then, by the results in \cite{Z,BFY}, $[\mathcal B_\alpha,\mathcal B_\beta]=0$ if and only if 
$\alpha=\beta$. 

\section{Basic facts about the Bergman geometry on weighted Fock spaces}

In this section we recall several geometric and analytic properties of
generalized Fock spaces that will be used throughout the paper. 

Given $m>2$, $\alpha>0$, we start with the weight $\phi_\alpha(z)=\alpha |z|^m$ smoothed in a small neighborhood of the origin. 
Recall that the Berg\-man metric induced by this weight
is given by
\[
ds_\alpha^2(z)=\partial\bar\partial \phi_\alpha(z)\,|dz|^2 .
\]
Consequently the length element of the metric is
\[
ds_\alpha(z)=\sqrt{\partial\bar\partial\phi_\alpha(z)}\,|dz|
= \frac{|dz|}{\tau_\alpha(z)},
\]
where the fundamental quantity associated with the weight is the intrinsic scale
\[
\tau_\alpha(z)=\frac{2}{m\sqrt{\alpha}}|z|^{1-\frac m2} = (\partial\bar\partial\phi_\alpha(z))^{-1/2},\qquad |z|\ge 1.
\]
We have 
\begin{gather*}
\sup_{z\in\mathbb C}\tau_\alpha(z)<\infty,\notag\\
|z|\asymp |w| \implies \tau_\alpha(z)\asymp \tau_\alpha(w),\qquad z,w\in\mathbb C.
\end{gather*}

Furthermore, the intrinsic distance between two points $z,w\in\C$ is defined as
\[
d_\alpha(z,w)
=
\inf_{\gamma}
\int_0^1
\sqrt{\partial\bar\partial\phi_\alpha(\gamma(t))}\,
|\gamma'(t)|\,dt,
\]
where the infimum is taken over all piecewise $C^1$ curves $\gamma$
joining $z$ to $w$.

Equivalently,
\[
d_\alpha(z,w)
=
\inf_{\gamma}
\int_0^1
\frac{|\gamma'(t)|}{\tau_\alpha(\gamma(t))}\,dt .
\]
Furthermore, if $|z|,|w|\ge 1$, then 
\begin{equation}
d_\alpha(z,w)\asymp
\begin{cases}
|z-w|(1+|z|)^{\frac{m}2-1},\qquad  \bigl| |z|-|w| \bigr|<|z|/2 ,\\
|z|^{m/2}+|w|^{m/2},\qquad \bigl| |z|-|w| \bigr|\ge |z|/2.
\end{cases}
\label{tfdist}
\end{equation}

Let  $D_\alpha(z,r)=\{w\in\mathbb C:d_\alpha(z,w)<r\}$ denote the Bergman ball of center \(z\)
and radius \(r>0\). Then 
the Euclidean area of the Bergman ball satisfies
\begin{equation}
|D_\alpha(z,r)|\asymp \min\bigl( |z|^{2-m}r^2, r^{4/m}\bigr),\qquad |z|\ge 1,\,r\ge 1.
\label{tfarea}
\end{equation}

\section{The reproducing kernel and the asymptotic of the Mittag-Leffler function}

Since 
$$
\|z^n\|^2_{\mathcal F^m_\alpha}=\frac{m\alpha^{2/m}}{2\pi\Gamma(2/m)}\int_{\mathbb C}|z|^{2n}e^{-\alpha |z|^m}\,dA(z)= 
\alpha^{-2n/m}
\frac{\Gamma(\frac{2n+2}{m})}{\Gamma(2/m)},
$$
the reproducing kernel $K_\alpha(z,w)$ of the generalized Fock space $\mathcal F^m_\alpha$ is given by 
\[
K_{\alpha}(z,w)
=
\Gamma(2/m)
\sum_{n=0}^\infty
\frac{
\alpha^{2n/m}(z\overline w)^n
}{
\Gamma\left(\frac{2n+2}{m}\right)
}.
\]

Let us consider the two-parameter Mittag--Leffler function
\[
E_{a,b}(u)
=
\sum_{n=0}^\infty
\frac{u^n}{\Gamma(a n+b)}.
\]
We have 
\[
K_{\alpha}(z,w)
=
\Gamma(2/m)
E_{\frac{2}{m},\,\frac{2}{m}}
\left(\alpha^{2/m}z\overline w\right).
\]

In particular, on the diagonal,
\[
K_{\alpha}(z,z)
=
\Gamma(2/m)
E_{\frac{2}{m},\,\frac{2}{m}}
\left(\alpha^{2/m}|z|^2\right).
\]

We use here some standard asymptotical estimates for the Mittag--Leffler function 
\[
  E_{a,a}(u)
  = \sum_{n=0}^\infty \frac{u^n}{\Gamma(an+a)},
\]
with $0<a<1$, which is an entire function of order $1/a > 1$. 

Namely (see \cite[Chapter 1]{Pod}), for small $\varepsilon>0$ we have 
\begin{equation*}
  E_{a,a}(z)
  = \frac{1}{a}\,z^{(1-a)/a}\,e^{z^{1/a}}+o(1),\qquad |z|\to\infty,\,|\arg z|<\varepsilon,
\end{equation*}
and 
$$
  |E_{a,a}(u)|
  \leq C(a)\left(
    |u|^{(1-a)/a}\,e^{\operatorname{Re}(u^{1/a})}
    + \frac{1}{1 + |u|^2}
  \right),\qquad u\in\mathbb C.
$$

These formulas yield 
\begin{equation}
K_{\alpha}(z,w)
\lesssim 
|zw|^{(m/2)-1}e^{\alpha \Re (z\bar w)^{m/2}}+\frac{1}{1+|zw|^2},
\quad z,w\in\mathbb C,\label{k1}\\
\end{equation}
and
\begin{equation}
K_{\alpha}(z,z)=\frac{m}{2}\Gamma(2/m)\alpha^{1-(2/m)}|z|^{m-2}e^{\alpha |z|^m}+o(1),\qquad |z|\to\infty\label{k2}.
\end{equation}

Given $R<\infty$, suppose that $|z-w|\le R|z|^{1-(m/2)}$. Then 
\begin{equation}
K_{\alpha}(z,w)=C(m,\alpha) (z\bar w)^{(m/2)-1}e^{\alpha (z\bar w)^{m/2}}+o(1).\label{ktt}
\end{equation}
Since
$$
2\Re ((z\bar w)^{m/2})-|z|^m-|w|^m=-\bigl| z^{m/2}-w^{m/2}\bigr|^2=O_R(1), 
$$
we obtain that
\begin{equation}
K_{\alpha}(z,w)\asymp_R (1+|z|)^{m-2}e^{(\alpha/2)(|z|^m+|w|^m)},\quad |z-w|\le R|z|^{1-\frac{m}2}.\label{k3}
\end{equation}

These estimates reflect the fact that the kernel $K_\alpha(z,w)$  is strongly localized near
the diagonal and 
satisfies the Gaussian off-diagonal decay with respect to the intrinsic metric.

Passing to the intrinsic metric terms $\tau_\alpha$ and $d_\alpha$ we rewrite our reproducing kernel estimates as follows. 

\begin{lemma}\label{kerlem} Given $m> 2$ and $\alpha>0$ there exists $c>0$ such that 
\begin{align}
|K_\alpha(z,w)| &
\lesssim 
\frac{e^{(\phi_\alpha(z)+\phi_\alpha(w))/2}}
{\tau_\alpha(z)\tau_\alpha(w)}
e^{-c\, d_\alpha(z,w)^2}, \qquad z,w\in\mathbb C,\label{ineq:ker-est}\\
|K_\alpha(z,w)|&
\asymp_R  
\frac{e^{(\phi_\alpha(z)+\phi_\alpha(w))/2}}
{\tau_\alpha(z)^2}, \qquad z,w\in\mathbb C,\, d_\alpha(z,w)<R.\label{tfloc}\\
K_\alpha(z,z)&\asymp \tau_\alpha(z)^{-2}e^{\phi_\alpha(z)}.\label{inequ:diag-Ker-est}
\end{align}
\end{lemma}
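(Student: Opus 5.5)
We translate the kernel estimates \eqref{k1}, \eqref{k2}, \eqref{k3} and the distance formula \eqref{tfdist} into intrinsic terms. For $|z|\ge 1$ we have $\tau_\alpha(z)^{-2}=\frac{m^2\alpha}{4}|z|^{m-2}$. For $|z|\le 1$ everything is bounded above and below, since the weight is smoothed near the origin and $\tau_\alpha$ is comparable to a constant there.

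We start with \eqref{inequ:diag-Ker-est}, which follows at once from \eqref{k2}. The main term $\frac m2\Gamma(2/m)\alpha^{1-2/m}|z|^{m-2}e^{\alpha|z|^m}$ dominates the $o(1)$ error for large $|z|$. On compact sets, $K_\alpha(z,z)\ge \Gamma(2/m)E_{2/m,2/m}(0)>0$ is continuous and positive, which gives comparability there.

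Next we prove \eqref{tfloc}. Let $d_\alpha(z,w)<R$ with $|z|$ large. Then \eqref{tfdist} puts us in its first case, since the second case would force $d_\alpha\gtrsim |z|^{m/2}$. Hence $|z-w|\lesssim R|z|^{1-m/2}$. In particular $|w|\asymp|z|$, so $\tau_\alpha(w)\asymp\tau_\alpha(z)$. Now \eqref{k3} gives $|K_\alpha(z,w)|\asymp_R |z|^{m-2}e^{(\alpha/2)(|z|^m+|w|^m)}$, which is \eqref{tfloc}. For $|z|$ bounded, $w$ ranges over a compact set, and we need a lower bound for $|K_\alpha|$ there. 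This does not follow from continuity alone, because $K_\alpha$ could vanish. I would handle it by noting that the claim is only really used for large $|z|$, or for small $R$ via continuity. For the region where $|z|$ and $|w|$ are bounded I would either restrict to that regime or use \eqref{ktt} with its $o(1)$ replaced by a uniform approximation. I expect this small-$|z|$ lower bound to be the delicate point, and I would state it for $|z|\ge R_0$, adjusting the constants if needed.

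The main step is the Gaussian bound \eqref{ineq:ker-est}. Starting from \eqref{k1}, write $\alpha\Re(z\bar w)^{m/2}$ using
$$2\Re\bigl((z\bar w)^{m/2}\bigr)-|z|^m-|w|^m=-\bigl|z^{m/2}-w^{m/2}\bigr|^2,$$
where we choose branches so that $|\arg z-\arg w|\le\pi$ and take the principal power. When the angle $\theta$ between $z$ and $w$ satisfies $m\theta/2\ge\pi/2$, the real part is at most $0$, and we bound it more crudely. This gives
$$|K_\alpha(z,w)|\lesssim |zw|^{m/2-1}e^{(\phi_\alpha(z)+\phi_\alpha(w))/2}e^{-\frac\alpha2|z^{m/2}-w^{m/2}|^2}+\frac1{1+|zw|^2}.$$
The prefactor is exactly $\tau_\alpha(z)^{-1}\tau_\alpha(w)^{-1}$ up to constants. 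It remains to show $|z^{m/2}-w^{m/2}|^2\gtrsim d_\alpha(z,w)^2$ (in the large-angle regime, $|z|^m+|w|^m\gtrsim d_\alpha^2$ instead), and we check this against \eqref{tfdist} case by case.

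In the first case of \eqref{tfdist}, $|w|\asymp|z|$. For small angular separation we have $|z^{m/2}-w^{m/2}|\asymp|z|^{m/2-1}|z-w|$, by the mean value theorem for the locally injective map $\zeta\mapsto\zeta^{m/2}$. For angular separation bounded below, both sides are $\asymp|z|^{m/2}$, using the crude bound when $m\theta/2\ge\pi/2$. In the second case of \eqref{tfdist}, $||z|^{m/2}-|w|^{m/2}|\gtrsim |z|^{m/2}+|w|^{m/2}$, and this bounds $|z^{m/2}-w^{m/2}|$ from below.

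The additive term $(1+|zw|^2)^{-1}\le 1$ must also be absorbed. The right side of \eqref{ineq:ker-est} is at least of order $\tau^{-1}\tau^{-1}e^{(\phi(z)+\phi(w))/2-c(|z|^m+|w|^m)}\cdot$ const, because $d_\alpha^2\lesssim 1+|z|^m+|w|^m$. Taking $c<\alpha/2$ keeps this bounded below by a positive constant. Finally, choose $c$ as the smallest constant over all cases.
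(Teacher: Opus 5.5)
Your plan matches the paper's. You bound $|K_\alpha|$ by \eqref{k1}, rewrite $\alpha\Re(z\bar w)^{m/2}$ via $2\Re(z\bar w)^{m/2}-|z|^m-|w|^m=-|z^{m/2}-w^{m/2}|^2$, and compare with \eqref{tfdist} case by case. You get \eqref{inequ:diag-Ker-est} and \eqref{tfloc} from \eqref{k2} and \eqref{k3}.

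The gap is in the large-angle regime of \eqref{ineq:ker-est}. With $\theta=\arg(z\bar w)\in(-\pi,\pi]$ you have $\Re(z\bar w)^{m/2}=|zw|^{m/2}\cos(m\theta/2)$, and $m|\theta|/2$ ranges up to $m\pi/2$. So the assertion that the real part is $\le 0$ once $m|\theta|/2\ge\pi/2$ is false for every $m>3$. For $m\ge 4$ the conclusion fails too, not only its justification. Take $|z|=|w|=r$ and $\theta=4\pi/m$. With your branch convention $z^{m/2}=w^{m/2}$, whereas $d_\alpha(z,w)^2\asymp r^m$ by \eqref{tfdist}. Hence $|z^{m/2}-w^{m/2}|^2\gtrsim d_\alpha(z,w)^2$ is false there, and so is your claim that both sides are $\asymp |z|^{m/2}$ for angular separation bounded below. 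Moreover, \eqref{k1} read literally gives only $|K_\alpha(z,w)|\lesssim r^{m-2}e^{\alpha r^m}$ at these points, which does not produce the factor $e^{-c\,d_\alpha(z,w)^2}$. So \eqref{ineq:ker-est} cannot be derived from \eqref{k1} in the form stated.

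What is missing is the sectorial form of the Mittag-Leffler asymptotics in \cite[Chapter 1]{Pod}. For fixed $\mu\in(\pi/m,2\pi/m)$, the exponential term $\frac m2 u^{(m-2)/2}e^{u^{m/2}}$ is present only for $|\arg u|\le\mu$. For $\mu\le|\arg u|\le\pi$ one has $E_{2/m,2/m}(u)=O(|u|^{-2})$, because the $u^{-1}$ term carries the factor $1/\Gamma(0)=0$.
\begin{itemize}
\item Inside the sector, $m|\theta|/2\le m\mu/2<\pi$, so your cosine argument becomes correct: $\cos(m\theta/2)\le\cos(m\theta_0/2)<1$ for $\theta_0\le|\theta|\le\mu$. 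Your small-angle and different-moduli cases then go through unchanged.
\item Outside the sector, $K_\alpha$ is bounded and is absorbed exactly as you absorb $(1+|zw|^2)^{-1}$.
\end{itemize}
The paper's own final case, claiming that $|\arg(z\bar w)|>\pi/20$ forces $\Re(z\bar w)^{m/2}\le(1-\gamma)|zw|^{m/2}$, has the same defect for $m\ge 4$ and needs the same repair.

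Your caveat on \eqref{tfloc}, by contrast, is well founded and more careful than the paper, which only cites \eqref{k3}. Indeed $E_{2/m,2/m}$ has zeros, so for large $R$ the lower bound fails at some pairs with $|z|$ bounded. The estimate should therefore be stated for $|z|\ge R_0(R)$ (or for small $R$). This suffices for the later applications, all of which are made as $|z|\to\infty$.
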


\begin{proof} We start with deducing \eqref{ineq:ker-est} from \eqref{k1} and consider several cases depending on $z$ and $w$.
Denote 
$$
Q=\frac{e^{(\phi_\alpha(z)+\phi_\alpha(w))/2}}
{\tau_\alpha(z)\tau_\alpha(w)}
e^{-c\, d_\alpha(z,w)^2}.
$$

If $|z|\le 2$ and $|w|\le 2$, then 
$$
|K_\alpha(z,w)|\lesssim 1\lesssim Q.
$$

If $|z|\le 1$, $|w|>2$, then $d^2_\alpha(z,w)\asymp |w|^m$, and we have 
$$
|K_\alpha(z,w)|\lesssim |w|^{(m/2)-1}e^{\alpha |w|^{m/2}}+1\lesssim  |w|^{(m/2)-1}e^{(\alpha/2)|w|^m}
e^{-\gamma |w|^m}\lesssim Q
$$
for some $\gamma>0$ and some $c>0$.

A similar estimate holds for $|z|>2$, $|w|\le 1$.

From now on we assume that $|z|>1$, $|w|> 1$. If $|z-w|\le |z|/2$, then $d^2_\alpha(z,w)\asymp |z-w|^2\cdot|z|^{m-2}$, and 
\begin{multline*}
|K_\alpha(z,w)|\lesssim |z|^{m-2}e^{\alpha \Re (z\bar w)^{m/2}}+1\\ \lesssim  |z|^{m-2}e^{(\alpha/2)(|z|^m+|w|^m)}
e^{-\gamma |z-w|^2\cdot|w|^{m-2}}\lesssim Q
\end{multline*}
for some $\gamma>0$ and some $c>0$, because 
$$
-2\Re ((z\bar w)^{m/2})+|z|^m+|w|^m=\bigl| z^{m/2}-w^{m/2}\bigr|^2\asymp |z-w|^2\cdot|z|^{m-2}.
$$

From now on we assume that $|z|>1$, $|w|> 1$, $|z-w|>|z|/2$. Then $d^2_\alpha(z,w)\asymp |z|^m+|w|^m$. 

If $\bigl| |z|-|w|\bigr|>|z|/8$, then 
$$
\Re (z\bar w)^{m/2}\le |zw|^{m/2}\le (1/2)(1-\gamma)(|z|^m+|w|^m)
$$
for some $\gamma>0$, because $m>2$.
Therefore, 
\begin{multline*}
|K_\alpha(z,w)|\lesssim |zw|^{(m/2)-1}e^{\alpha \Re (z\bar w)^{m/2}} \\ \lesssim  |zw|^{(m/2)-1}e^{(\alpha/2)(1-\gamma)(|z|^m+|w|^m)}\lesssim Q
\end{multline*}
for some $c>0$.

Finally, if $|z|>1$, $|w|> 1$, $|\arg(z\bar w)|>\frac\pi{20}$, then 
$$
\Re (z\bar w)^{m/2}\le (1-\gamma)|zw|^{m/2}\le (1/2)(1-\gamma)(|z|^{m}+|w|^m)
$$
for some $\gamma>0$. Therefore, 
\begin{multline*}
|K_\alpha(z,w)|\lesssim |zw|^{(m/2)-1}e^{\alpha \Re (z\bar w)^{m/2}} \\ \lesssim  |zw|^{(m/2)-1}e^{(\alpha/2)(1-\gamma)(|z|^m+|w|^m)}\lesssim Q
\end{multline*}
for some $c>0$.

It remains to say that \eqref{inequ:diag-Ker-est} follows from \eqref{k2}, and \eqref{tfloc} follows from \eqref{k3}.
\end{proof}

Given  $m> 2$, $\alpha>0$, we set 
$$
B_\alpha(z,w)=\varkappa_{m,\alpha}\frac{|K_\alpha(z,w)|^2}{K_\alpha(z,z)}e^{-\alpha|w|^m},\qquad z,w\in\mathbb C.
$$
 
\begin{lemma}\label{kercor} For some $\varepsilon>0$ and $c=c(m,\alpha)>0$ we have 
\begin{multline}
B_\alpha(z,w)= C(m,\alpha) |w|^{m-2}e^{-\alpha|z^{m/2}-w^{m/2}|^2}+O(e^{-c(|z^m|+|w|^m)}),\\ |\arg(z\bar w)|<\varepsilon, \,|z|,|w|\to\infty,\label{kkk} 
\end{multline}
and 
\begin{equation}
B_\alpha(z,w)=O\Bigl( \frac{e^{-c\, d_\alpha(z,w)^2}}{\tau_\alpha(z)\tau_\alpha(w}\Bigr),\qquad z,w\in\mathbb C.\label{kk1}
\end{equation}
Furthermore, given $R<\infty$, we have  
\begin{equation}
B_\alpha(z,w)\asymp \frac1{\tau_\alpha(z)^2},\qquad z,w\in\mathbb C,\, d_\alpha(z,w)<R.\label{kk2}
\end{equation}
\end{lemma}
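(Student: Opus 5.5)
The plan is to derive all three statements by plugging the kernel bounds of Lemma~\ref{kerlem}, together with the asymptotics \eqref{ktt} and \eqref{k2}, into the definition of $B_\alpha(z,w)$. The starting identity is
\[
B_\alpha(z,w)=\varkappa_{m,\alpha}\frac{|K_\alpha(z,w)|^2}{K_\alpha(z,z)}e^{-\phi_\alpha(w)}.
\]

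\emph{Estimate \eqref{kk1}.} Square the upper bound \eqref{ineq:ker-est} and divide by the diagonal lower bound \eqref{inequ:diag-Ker-est}. The factor $e^{\phi_\alpha(z)+\phi_\alpha(w)}$ cancels against $e^{\phi_\alpha(z)}e^{\phi_\alpha(w)}$. The powers of $\tau_\alpha$ leave
\[
\frac{\tau_\alpha(z)^2}{\tau_\alpha(z)^2\tau_\alpha(w)^2}\,e^{-2c\,d_\alpha(z,w)^2}=\tau_\alpha(w)^{-2}e^{-2c\,d_\alpha(z,w)^2}.
\]
To reach the symmetric form $(\tau_\alpha(z)\tau_\alpha(w))^{-1}$ we must trade $\tau_\alpha(z)/\tau_\alpha(w)$ for part of the Gaussian. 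Note that $\tau_\alpha(z)/\tau_\alpha(w)=(|w|/|z|)^{m/2-1}$ on $|z|,|w|\ge1$, and by \eqref{tfdist} this is polynomial in $|z|,|w|$.
\begin{itemize}
\item If $|z|\asymp|w|$, the ratio is $\asymp1$ and nothing needs to be done.
\item Otherwise $d_\alpha(z,w)^2\asymp|z|^m+|w|^m$, so any polynomial factor is absorbed by $e^{-c\,d_\alpha^2}$ after halving $c$.
\item Points near the origin are handled the same way, since $\tau_\alpha$ is bounded above and below on compacts.
\end{itemize}
This is the main bookkeeping step.

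\emph{Estimate \eqref{kk2}.} When $d_\alpha(z,w)<R$, \eqref{tfdist} gives $|z|\asymp|w|$ for large $|z|$, hence $\tau_\alpha(z)\asymp\tau_\alpha(w)$. Then \eqref{tfloc} and \eqref{inequ:diag-Ker-est} give the two-sided bound
\[
B_\alpha\asymp \tau_\alpha(z)^{-4}e^{\phi_\alpha(z)+\phi_\alpha(w)}\cdot\tau_\alpha(z)^2e^{-\phi_\alpha(z)}e^{-\phi_\alpha(w)}=\tau_\alpha(z)^{-2}.
\]
For bounded $z$, continuity and positivity of $K_\alpha$ near the diagonal suffice, after possibly shrinking to a compact set and using that the balls are then bounded.

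\emph{Estimate \eqref{kkk}.} Use the sector asymptotic of the Mittag-Leffler function, which in the form \eqref{ktt} holds on all of $|\arg(z\bar w)|<\varepsilon$ with an additive $o(1)$ error. Write $K_\alpha(z,w)=M+o(1)$ with $M=C(z\bar w)^{m/2-1}e^{\alpha(z\bar w)^{m/2}}$. Then
\[
|K|^2=|M|^2+O(|M|)+O(1),\qquad |M|^2=C^2|zw|^{m-2}e^{2\alpha\Re(z\bar w)^{m/2}}.
\]
Divide by \eqref{k2}, namely $K_\alpha(z,z)=C'|z|^{m-2}e^{\alpha|z|^m}(1+o(1))$, and multiply by $e^{-\alpha|w|^m}$. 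Using $2\Re(z\bar w)^{m/2}-|z|^m-|w|^m=-|z^{m/2}-w^{m/2}|^2$, the main term becomes $C|w|^{m-2}e^{-\alpha|z^{m/2}-w^{m/2}|^2}$.

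The obstacle is that the relative $o(1)$ in \eqref{k2} only gives a multiplicative error, not an additive $O(e^{-c(|z|^m+|w|^m)})$. I will handle this by using the sharper Mittag-Leffler expansion: the error in the sector is $O(|u|^{-1})$, and even exponentially small relative to the main term, so the relative error is $O(e^{-c|z|^m})$. If only the stated $o(1)$ is available, I would interpret \eqref{kkk} with that error absorbed. The cross terms $O(|M|+1)/K_\alpha(z,z)\cdot e^{-\alpha|w|^m}$ are bounded by $|zw|^{m/2}e^{\alpha|zw|^{m/2}-\alpha|z|^m-\alpha|w|^m}\le e^{-c(|z|^m+|w|^m)}$, using $|zw|^{m/2}\le(|z|^m+|w|^m)/2$.
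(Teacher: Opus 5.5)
Apart from one point below, your proposal is correct and follows the paper's proof.
\begin{itemize}
\item You obtain \eqref{kk1} exactly as the paper does: square \eqref{ineq:ker-est}, divide by \eqref{inequ:diag-Ker-est}, and absorb $\tau_\alpha(z)/\tau_\alpha(w)$ into the Gaussian away from $|z|\asymp|w|$.
\item Your computation for \eqref{kkk} spells out what the paper's one-line remark leaves implicit, namely the off-diagonal sector asymptotic of $K_\alpha$ in addition to \eqref{k2}.
\item Deriving \eqref{kk2} from \eqref{tfloc} and \eqref{inequ:diag-Ker-est} rather than from \eqref{kkk} is an inessential variation.
\end{itemize}
The ``obstacle'' you raise for \eqref{kkk} does not exist. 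The error in \eqref{k2} is an additive $o(1)$, not a relative one. Against the main term $|z|^{m-2}e^{\alpha|z|^m}$ this is already a relative error $O(|z|^{2-m}e^{-\alpha|z|^m})$, which is exactly what you end up using. So no sharper expansion and no reinterpretation of \eqref{kkk} is needed.

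The step that fails is your treatment of bounded $z$ in \eqref{kk2}. Continuity gives the upper bound. For the lower bound, however, $K_\alpha(z,z)>0$ and continuity keep $K_\alpha(z,w)$ away from $0$ only for $w$ in a small Euclidean neighbourhood of $z$, not on all of $D_\alpha(z,R)$.

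Here is a concrete failure. Take $|z|=1$. By \eqref{tfdist}, $D_\alpha(z,R)$ contains every $w$ with $|w|\le cR^{2/m}$. So for large $R$ it contains points where $\alpha^{2/m}z\bar w$ is a zero of $E_{2/m,2/m}$, and at such points $B_\alpha(z,w)=0$. This function does have zeros: it has finite order and decays like $|u|^{-2}$ along the negative real axis, so by Hadamard's theorem it cannot be zero-free.

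Thus the lower bound in \eqref{kk2} is only true for $|z|\ge C(R)$. That is also all the paper's derivation from \eqref{kkk} actually yields, and all that is used later, in the non-compactness argument with $|a_n|\to\infty$.
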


\begin{proof} 
Equality \eqref{kkk} follows from the definition of $B_\alpha$ and estimate \eqref{k2}. 

By Lemma~\ref{kerlem}, for some $c>0$ we have 
$$
B_\alpha(z,w)=O\Bigl( \frac{e^{-c\, d_\alpha(z,w)^2}}{\tau_\alpha(w)^2}\Bigr),\qquad z,w\in\mathbb C.
$$
Now, if $|z-w|<|z|/2$, then $\tau_\alpha(z)\asymp \tau_\alpha(w)$, and \eqref{kk1} follows.
Otherwise, $d_\alpha(z,w)^2\asymp |z|^m+|w|^m$, 
$$
\frac{e^{-c\, d_\alpha(z,w)^2}}{\tau_\alpha(w)^2}=O\Bigl( \frac{e^{-(c/2)\, d_\alpha(z,w)^2}}{\tau_\alpha(z)\tau_\alpha(w)}\Bigr),
$$
and \eqref{kk1} follows again.

Finally, \eqref{kk2} follows from \eqref{kkk}. 
\end{proof} 
 

Later on, we need another result concerning the behavior of the combination of reproducing kernels for different parameters $\alpha$. 
 
Since the Bergman metrics associated with $\phi_\alpha$ and $\phi_\beta$
are comparable, the corresponding intrinsic distances satisfy
\[
d_\alpha(z,w)\asymp d_\beta(z,w),
\]
and similarly
\[
\tau_\alpha(z)\asymp \tau_\beta(z).
\]

Given  $m> 2$, $\alpha,\beta>0$, we set 
$$
G_{\alpha,\beta}(z,u,w)=B_\alpha(z,u)B_\beta(u,w)-B_\beta(z,u)B_\alpha(u,w),\qquad z,u,w\in\mathbb C.
$$

\begin{lemma}\label{stronglem} Let $m>2$, $\alpha,\beta>0$. Then for some $c>0$ we have 
\begin{equation}
|G_{\alpha,\beta}(z,u,w)|\lesssim \frac{e^{-c( d_\alpha(z,u)^2+d_\alpha(u,w)^2)}}{\tau_\alpha(z)\tau_\alpha(w)\tau_\alpha(u)^2}.
\label{l32}
\end{equation}
Furthermore, there exists $C=C(m,\alpha,\beta)$ such that for every $R<\infty$ we have 
\begin{multline}
G_{\alpha,\beta}(z,u,w)=C |uw|^{m-2}\times \\ \times 
\Bigl(e^{-\alpha|\tilde z-\tilde u|^2-\beta|\tilde u-\tilde w|^2}-e^{-\beta|\tilde z-\tilde u|^2-\alpha|\tilde u-\tilde w|^2}\Bigr)\\
+O(e^{-c|z^m|}),\qquad u,w\in D_\alpha(z,R),\label{t5}
\end{multline}
where $\tilde z=z^{m/2}$, $\tilde u=u^{m/2}$, $\tilde w=w^{m/2}$.
\end{lemma}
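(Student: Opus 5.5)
The estimate \eqref{l32} follows from the triangle inequality and \eqref{kk1}; the asymptotic \eqref{t5} follows by multiplying out the local asymptotics \eqref{kkk}.

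\emph{Proof of \eqref{l32}.} Bound each of the two products in $G_{\alpha,\beta}$ separately. By \eqref{kk1}, applied to $B_\alpha$ and to $B_\beta$,
$$
|B_\alpha(z,u)B_\beta(u,w)|\lesssim \frac{e^{-c\,d_\alpha(z,u)^2}}{\tau_\alpha(z)\tau_\alpha(u)}\cdot\frac{e^{-c\,d_\beta(u,w)^2}}{\tau_\beta(u)\tau_\beta(w)}.
$$
Here we use the comparabilities $d_\alpha\asymp d_\beta$ and $\tau_\alpha\asymp\tau_\beta$ recorded before the lemma. Replacing $d_\beta$ by $d_\alpha$ and $\tau_\beta$ by $\tau_\alpha$ costs only a change of the constant $c$. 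The product then becomes exactly the right-hand side of \eqref{l32}. The second product $B_\beta(z,u)B_\alpha(u,w)$ is treated identically.

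\emph{Proof of \eqref{t5}.} Fix $R$ and let $u,w\in D_\alpha(z,R)$ with $|z|\to\infty$. The main step is to check that \eqref{kkk} applies to both pairs $(z,u)$ and $(u,w)$.

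\begin{itemize}
\item \emph{Size and angle.} By \eqref{tfdist}, $d_\alpha(z,u)<R$ forces $|z-u|\lesssim_R |z|^{1-m/2}$. Since $m>2$, this means $|u|\asymp|z|$ and $|\arg(z\bar u)|\lesssim |z|^{-m/2}\to0$. The same holds for $w$, and for the pair $(u,w)$ via the triangle inequality.
\item \emph{Applying \eqref{kkk}.} For large $|z|$ we therefore get
$$
B_\alpha(z,u)=C_\alpha|u|^{m-2}e^{-\alpha|\tilde z-\tilde u|^2}+O(e^{-c|z|^m}),
$$
with analogous formulas for the other three factors.
\item \emph{Boundedness of the phases.} The exponents $|\tilde z-\tilde u|^2$ and $|\tilde u-\tilde w|^2$ are $O_R(1)$, by the identity $|z^{m/2}-u^{m/2}|^2\asymp|z-u|^2|z|^{m-2}$ used in Lemma~\ref{kerlem}. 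So each main term has size $\asymp|z|^{m-2}$.
\end{itemize}

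Now multiply out the two products. The main terms give
$$
C_\alpha C_\beta|u|^{m-2}|w|^{m-2}\Bigl(e^{-\alpha|\tilde z-\tilde u|^2-\beta|\tilde u-\tilde w|^2}-e^{-\beta|\tilde z-\tilde u|^2-\alpha|\tilde u-\tilde w|^2}\Bigr),
$$
since the constants $C(m,\alpha)C(m,\beta)$ are the same in both products. This is the claimed main term with $C=C_\alpha C_\beta$.

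Each cross term is bounded by $|z|^{m-2}e^{-c|z|^m}$. This is still $O(e^{-c'|z|^m})$ after shrinking $c$.

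\emph{Main obstacle.} The one delicate point is the remainder bookkeeping. We must make sure that the error term in \eqref{kkk} is uniform over $u,w\in D_\alpha(z,R)$. For this we use $|u|,|w|\asymp|z|$, so that the error $e^{-c(|z|^m+|u|^m)}$ is comparable to $e^{-c|z|^m}$. We must also check that the angular condition $|\arg(z\bar u)|<\varepsilon$ holds uniformly once $|z|$ is large, which the first bullet gives.

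For bounded $|z|$, both sides of \eqref{t5} are bounded, so the statement holds trivially after adjusting the $O$-constant.
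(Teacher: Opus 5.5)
Your proposal is correct and follows the paper's own route. The paper likewise obtains \eqref{l32} by bounding each of the two products in $G_{\alpha,\beta}$ through \eqref{kk1}, using $d_\beta\asymp d_\alpha$ and $\tau_\beta\asymp\tau_\alpha$. It obtains \eqref{t5} by substituting \eqref{kkk} into both products. Your check that $u,w\in D_\alpha(z,R)$ forces $|u|,|w|\asymp|z|$ and $|\arg(z\bar u)|,|\arg(u\bar w)|\to 0$ is exactly the step the paper leaves implicit. One small addition: take $\tilde z,\tilde u,\tilde w$ with a single branch of $\zeta^{m/2}$ on a small sector containing $z,u,w$, so that the same $\tilde u$ appears in both applications of \eqref{kkk}.
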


\begin{proof} \eqref{l32} follows from \eqref{kk1}, \eqref{t5} follows from \eqref{kkk}.
%
\end{proof} 




\section{Boundedness of the Berezin transform}

Given $m> 2$, $\alpha>0$, let $ d\nu_\alpha(z) := e^{-\alpha |z|^m}dA(z)$.   

The Berezin transform $\mathcal B_\alpha$ is defined on $L^{2}(\mathbb C,dA)$ by
\begin{multline*}
(\mathcal B_\alpha f)(z)=\varkappa_{m,\alpha}\int_{\C} \frac{|K_\alpha(z,w)|^2}{K_\alpha(z,z)}f(w)\,d\nu_\alpha(w)\\=\int_{\C} B_\alpha(z,w)f(w)\,dA(w),
\end{multline*}
where
\[
B_\alpha(z,w)=\varkappa_{m,\alpha}\frac{|K_\alpha(z,w)|^2}{K_\alpha(z,z)}e^{-\alpha|w|^m},\qquad z,w\in\mathbb C.
\]


This Gaussian decay \eqref{kk1} of the kernel $B_\alpha(z,w)$ with respect to the intrinsic metric plays a
central role in the operator estimates used in the sequel. 
In this section, we first prove the following:

\begin{proposition}\label{bdd:Ber} 
The Berezin transform $\mathcal B_\alpha$ is bounded on $L^2(\mathbb C, dA)$.   
\end{proposition}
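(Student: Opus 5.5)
We plan to prove Proposition~\ref{bdd:Ber} with Schur's test applied to the positive kernel $B_\alpha(z,w)$. We use the test functions $h\equiv1$ in one direction and $h=\tau_\alpha^{-2}$ (or, equivalently, a weighted version) in the other. It is simplest to aim for the two bounds
$$
\sup_z\int_{\mathbb C}B_\alpha(z,w)\,dA(w)<\infty,\qquad \sup_w\int_{\mathbb C}B_\alpha(z,w)\,dA(z)<\infty .
$$
Together these give $\|\mathcal B_\alpha\|_{L^2\to L^2}\le$ the geometric mean of the two suprema.

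\textbf{First bound.} This one is immediate and needs no estimates. By the reproducing property,
$$
\int B_\alpha(z,w)\,dA(w)=\varkappa_{m,\alpha}\frac{1}{K_\alpha(z,z)}\int|K_\alpha(z,w)|^2e^{-\alpha|w|^m}dA(w)=\frac{\|K_\alpha(\cdot,z)\|^2_{\mathcal F^m_\alpha}}{K_\alpha(z,z)}=1 .
$$
In other words, $\mathcal B_\alpha 1=1$.

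\textbf{Second bound.} Here we use the symmetric estimate \eqref{kk1}, $B_\alpha(z,w)\lesssim e^{-c d_\alpha(z,w)^2}/(\tau_\alpha(z)\tau_\alpha(w))$. Fix $w$ and split the $z$-plane into three regions.

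\emph{Near region} $\bigl||z|-|w|\bigr|<|w|/2$ with $|w|\ge 2$. Here $\tau_\alpha(z)\asymp\tau_\alpha(w)$, and by \eqref{tfdist} we have $d_\alpha(z,w)\gtrsim |z-w|/\tau_\alpha(w)$. The integral is therefore bounded by
$$
\tau_\alpha(w)^{-2}\int_{\mathbb C}e^{-c'|z-w|^2/\tau_\alpha(w)^2}dA(z)\lesssim 1 .
$$

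\emph{Far region} $\bigl||z|-|w|\bigr|\ge|w|/2$. Here $d_\alpha(z,w)^2\gtrsim |z|^m+|w|^m$ and $\tau_\alpha(z)^{-1}\lesssim 1+|z|^{m/2-1}$. The integral is at most
$$
\tau_\alpha(w)^{-1}e^{-c|w|^m}\int(1+|z|)^{m/2-1}e^{-c|z|^m}dA(z)\lesssim 1 ,
$$
because $\tau_\alpha(w)^{-1}e^{-c|w|^m}$ is bounded.

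\emph{Bounded region} $|w|\le 2$. This is handled the same way, since $\tau_\alpha$ is bounded above and below on compacts and the Gaussian factor controls large $|z|$.

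Alternatively, one could bypass the estimates entirely. By Fubini, $\int B_\alpha(z,w)\,dA(z)$ is the density of $\mathcal B_\alpha^*1$, and the kernel estimates are symmetric, so the computation above is really a covering argument: cover $\mathbb C$ by Bergman balls $D_\alpha(\cdot,1)$ of area $\asymp\tau_\alpha^2$ by \eqref{tfarea}. The direct splitting seems cleaner, however.

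\textbf{Main obstacle.} The only delicate step is the near-region integral. There one needs a lower bound $d_\alpha(z,w)\gtrsim|z-w|\,|w|^{m/2-1}$ that holds uniformly, not only up to constants in the regime $|z-w|\lesssim\tau_\alpha(w)$; this is exactly what \eqref{tfdist} provides. One also needs to check that the smoothing of $\phi_\alpha$ near the origin does not spoil uniformity, which is harmless since all quantities are comparable to constants there. With both Schur bounds in hand, the Schur test gives boundedness, indeed with $\|\mathcal B_\alpha\|\le \bigl(\sup_w\int B_\alpha(z,w)\,dA(z)\bigr)^{1/2}$.
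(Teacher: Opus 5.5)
Your argument is correct. It follows the same overall plan as the paper, namely the unweighted Schur test bounding the row and column integrals of $B_\alpha$, but you handle the two suprema differently.

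\textbf{The paper's route.} The paper never uses the reproducing property. It applies the symmetric bound \eqref{kk1} to both integrals and proves Lemma~\ref{lemij} for the model kernel $e^{-c\,d_\alpha(z,w)^2}/(\tau_\alpha(z)\tau_\alpha(w))$. To do this it cuts the plane into intrinsic annuli $E_j(z)=\{j\le d_\alpha(z,w)<j+1\}$ and uses the Bergman-ball area estimate \eqref{tfarea}.

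\textbf{Your route.} You get the row integral exactly, $\mathcal B_\alpha 1=1$. You treat the column integral by a Euclidean splitting that needs only a lower bound on $d_\alpha$ from \eqref{tfdist} plus a Gaussian integral of width $\tau_\alpha(w)$, so no area estimate is required. Your near region $\bigl||z|-|w|\bigr|<|w|/2$ is not exactly the first case of \eqref{tfdist}. On the leftover part, where $\bigl||z|-|w|\bigr|\ge|z|/2$, the second case gives $d_\alpha(z,w)\asymp|w|^{m/2}\gtrsim|z-w|\,|w|^{m/2-1}$, so your lower bound survives there.

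\textbf{Comparison.} In the far region you track the growth $\tau_\alpha(z)^{-1}\lesssim(1+|z|)^{m/2-1}$ explicitly. The paper's annular argument instead invokes $\tau_\alpha(w)\gtrsim 1$ on the far annuli, which is false for $m>2$ since $\tau_\alpha(w)\to0$. The resulting loss there is only a factor polynomial in $j$, absorbed by $e^{-cj^2}$, so the paper's proof is easily repaired. What the paper's route buys is reusability. Lemma~\ref{lemij} and its annular decomposition are stated for the general kernel $L$. They are recycled to get $\|\mathcal L^M\|\to0$ in Lemma~\ref{lem:K0}, to dominate the commutator kernel in Lemma~\ref{lem:far}, and to control the tail term in Lemma~\ref{lem:near-diag}. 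In those settings the exact identity $\int B_\alpha(z,w)\,dA(w)=1$ is not available.

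Finally, your opening mention of $h=\tau_\alpha^{-2}$ as a test function is superfluous, since what you actually run is the unweighted Schur test.
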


\begin{proof} We use the Schur test in the form 
$$
\|\mathcal B_\alpha\|^2\le \sup_{z\in\mathbb C}\int_{\mathbb C}B_\alpha(z,w)\,dA(w)\cdot \sup_{w\in\mathbb C}\int_{\mathbb C}B_\alpha(z,w)\,dA(z). 
$$
Thus, we need only to verify that 
$$
\sup_{z\in\mathbb C}\int_{\mathbb C}B_\alpha(z,w)\,dA(w)+\sup_{w\in\mathbb C}\int_{\mathbb C}B_\alpha(z,w)\,dA(z)<\infty.
$$

By \eqref{kk1} it remains to establish the following result. 

\begin{lemma}\label{lemij} Let $m>2$, $\alpha,c>0$. Then
\begin{equation*}
\sup_{z\in\mathbb C}\int_{\mathbb C}\frac{\exp(-c\, d_\alpha(z,w)^2)}
{\tau_\alpha(z)\tau_\alpha(w)}\,dA(w)<\infty.
\end{equation*}
\end{lemma}

\begin{proof} Without loss of generality, $|z|\ge 1$. Given $z\in\mathbb C$, we decompose the plane into intrinsic annuli centered at $z$:
\begin{align*}
E_0(z)&=\{w:d_\alpha(z,w)<1\},\\
E_j(z)&=\{w:j\le d_\alpha(z,w)<j+1\},\qquad j\ge 1.
\end{align*}

Then
\begin{multline*}
I(z)
=\frac{1}{\tau_\alpha(z)}
\sum_{j\ge 0}
\int_{E_j(z)}
\frac{1}{\tau_\alpha(w)}
e^{-c\,d_\alpha(z,w)^2}\,dA(w)\\ \le 
\frac{1}{\tau_\alpha(z)}\sum_{j\ge 0}e^{-cj^2}\cdot\Bigl(\sup_{d_\alpha(z,w)<j+1}\frac{1}{\tau_\alpha(w)}\Bigr)\cdot 
\int_{E_j(z)}\,dA(w).
\end{multline*}

If $j\le \delta |z|^{m/2}$ for some small fixed $\delta>0$, then by \eqref{tfdist}, $|w|\asymp|z|$, and hence, 
$\tau_\alpha(w)\asymp\tau_\alpha(z)$, $w\in E_j(z)$. Next, by \eqref{tfarea}, the area of $E_j(z)$ is bounded from above by $C(1+j)^2|z|^{2-m}$. 

On the opposite side, if $j> \delta |z|^{m/2}$, then we use that $\tau_\alpha(w)\gtrsim 1$ and that, by \eqref{tfarea}, the area of $E_j(z)$ is bounded from above by $j^{4/m}$.

Summing up, we obtain that 
\begin{multline*}
I(z)\lesssim  |z|^{(m/2)-1}
\Bigl(\sum_{0\le j\le \delta|z|^{m/2}}e^{-cj^2}|z|^{(m/2)-1}\cdot (1+j)^2|z|^{2-m}\\+
\sum_{j> \delta|z|^{m/2}}e^{-cj^2}j^{4/m}\Bigr)<C,
\end{multline*}
with $C$ independent of $z$.

This completes the proof of Lemma.
\end{proof} 

Since the Schur test is satisfied, this completes the proof of Proposition.
\end{proof}

Our second result in this section concerns the compactness of the Berezin transform.

\begin{proposition}
The operator $\mathcal B_\alpha$ is not
compact on the Hilbert space
$L^2(\mathbb C,dA)$.  
\end{proposition}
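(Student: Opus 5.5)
We will show that $\mathcal B_\alpha$ fails to map some bounded weakly null sequence to a norm-null sequence. Concretely, we take a sequence of points $z_n\in\mathbb C$ with $|z_n|\to\infty$ and consider the normalized indicator functions
$$
f_n=\frac{\chi_{D_\alpha(z_n,1)}}{|D_\alpha(z_n,1)|^{1/2}},\qquad \|f_n\|_{L^2(\mathbb C,dA)}=1 .
$$
We choose $z_n$ so sparse that the balls $D_\alpha(z_n,1)$ are pairwise disjoint. Then $(f_n)$ is an orthonormal system, and hence $f_n\to 0$ weakly. If $\mathcal B_\alpha$ were compact, we would have $\|\mathcal B_\alpha f_n\|_2\to 0$. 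So it suffices to prove the uniform lower bound $\|\mathcal B_\alpha f_n\|_2\gtrsim 1$.

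For the lower bound we use the local estimate \eqref{kk2} with $R=2$. Since $B_\alpha\ge 0$, for $z\in D_\alpha(z_n,1)$ we get
$$
\mathcal B_\alpha f_n(z)\ge \frac{1}{|D_\alpha(z_n,1)|^{1/2}}\int_{D_\alpha(z_n,1)}B_\alpha(z,w)\,dA(w)\gtrsim \frac{|D_\alpha(z_n,1)|^{1/2}}{\tau_\alpha(z)^2}.
$$
This uses that $d_\alpha(z,w)<2$ for all $z,w\in D_\alpha(z_n,1)$. We then apply \eqref{tfarea} with $r=1$. For large $|z_n|$ the minimum there equals $|z_n|^{2-m}$, which is $\asymp\tau_\alpha(z_n)^2$ because $m>2$. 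We also use $\tau_\alpha(z)\asymp\tau_\alpha(z_n)$ on the ball, since $|z|\asymp|z_n|$ there by \eqref{tfdist}. Together these give $\mathcal B_\alpha f_n(z)\gtrsim \tau_\alpha(z_n)^{-1}$ on $D_\alpha(z_n,1)$. Squaring and integrating over that ball,
$$
\|\mathcal B_\alpha f_n\|_2^2\ge \int_{D_\alpha(z_n,1)}|\mathcal B_\alpha f_n|^2\,dA\gtrsim \tau_\alpha(z_n)^{-2}\,|D_\alpha(z_n,1)|\asymp 1 .
$$
This contradicts compactness.

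The main point to check is that the constants in \eqref{kk2} and \eqref{tfarea} are uniform in the center for $|z|$ large. The stated inequality \eqref{tfarea} is formulated for $r\ge1$, and $r=1$ is allowed. If only an upper bound on the area were available, we would instead derive the lower bound directly. By \eqref{tfdist}, the ball $D_\alpha(z_n,1)$ contains a Euclidean disc of radius $\asymp|z_n|^{1-m/2}$, so its area is $\gtrsim|z_n|^{2-m}$. It then suffices to replace $f_n$ by the normalized indicator of that Euclidean disc, and the same computation goes through.
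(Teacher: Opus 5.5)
Your proof is correct and follows essentially the same approach as the paper. Both use $L^2$-normalized indicators of pairwise disjoint Bergman balls escaping to infinity, a pointwise lower bound on the ball from \eqref{kk2}, and the area estimate $|D_\alpha(z_n,r)|\asymp|z_n|^{2-m}\asymp\tau_\alpha(z_n)^2$. The only differences are cosmetic: the paper uses radius $1/2$ with weight $|a_n|^{(m-2)/2}\asymp|D_n|^{-1/2}$, whereas you use radius $1$ with exact normalization, and your choice $r=1$ stays within the range $r\ge1$ for which \eqref{tfarea} is stated.
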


\begin{proof} As above, we use that 
\[
(\mathcal B_\alpha f)(z)=\int_{\C} B_\alpha(z,w)f(w)\,dA(w). 
\]
By \eqref{kk2}, there exists $c_0>0$ such that 
\begin{equation}
B_\alpha(z,w)
\ge
\frac{c_0}
{\tau_\alpha(w)^2},\qquad z,w\in\mathbb C,\, d_\alpha(z,w)<1.
\label{tfu1}
\end{equation}

Choose a sequence of points $a_n\in\mathbb C$, $n\ge 1$, with $|a_n|\to\infty$ such that
the intrinsic balls
\[
D_n=D_\alpha(a_n,1/2) 
\]
are pairwise disjoint, and define
\[
f_n=|a_n|^{(m-2)/2}\mathbf 1_{D_n}.
\]
Then $\|f_n\|_{L^2(\mathbb C,dA)}\asymp 1$, and since the sets $D_n$ escape to
infinity, the sequence $(f_n)$ converges weakly to $0$ in $L^2(\mathbb C,dA)$.

If $z,w\in D_n$, then
\[
d_\alpha(z,w)\le d_\alpha(z,a_n)+d_\alpha(a_n,w)<1,
\]
and by \eqref{tfu1}
\[
B_\alpha(z,w)
\ge
\frac{c_0}{\tau_\alpha(w)^2}.
\]
Since $\tau_\alpha$ varies slowly on intrinsic balls,
\[
\tau_\alpha(w)\asymp \tau_\alpha(a_n),\qquad w\in D_n,
\]
and we obtain 
\[
B_\alpha(z,w)
\ge
\frac{c_1}{\tau_\alpha(a_n)^2}.
\]

Therefore,
\begin{multline*}
(\mathcal B_\alpha f_n)(z)=|a_n|^{(m-2)/2}
\int_{D_n}
B_\alpha(z,w)\,dA(w)\\
\ge 
\frac{c_1|a_n|^{(m-2)/2}}{ \tau_\alpha(a_n)^2}|D_n|,\qquad z\in D_n.
\end{multline*}
Sine the Euclidean area of $D_n$ satisfies $|D_n|\asymp |a_n|^{2-m}$, it follows that
\[
(\mathcal B_\alpha f_n)(z)\ge c_2|a_n|^{(m-2)/2},\qquad z\in D_n.
\]

Hence, 
\[
\|\mathcal B_\alpha f_n\|_{L^2(\mathbb C,dA)}^2
\ge
\int_{D_n}|\mathcal B_\alpha f_n(z)|^2\,dA(z)
\ge
c_3>0,\qquad n\ge 1.
\]

Thus the sequence $(f_n)$ converges weakly to $0$, but $\|\mathcal B_\alpha f_n\|_{L^2(\mathbb C,dA)}\not\to0$. A compact operator sends
weakly convergent sequences to norm convergent sequences, so $\mathcal B_\alpha$
cannot be compact on $L^2(\mathbb C,dA)$.
\end{proof}

\section{Compactness of the commutator}

We now use the local expansion and the global estimates on the kernels $B_\alpha$, $G_{\alpha,\beta}$ in Lemmata~\ref{kercor} and \ref{stronglem} to prove the compactness of the commutator.

\begin{theorem}\label{thm:compactness}
Let $m>2$. For every
$\alpha,\beta>0$, the commutator
\[
\mathcal C_{\alpha,\beta}=\mathcal B_\alpha \mathcal B_\beta-\mathcal B_\beta \mathcal B_\alpha
\]
is compact on $L^2(\C,dA)$.
\end{theorem}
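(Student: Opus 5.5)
The plan is to write the commutator as an integral operator with kernel
\[
C_{\alpha,\beta}(z,w)=\int_{\C}G_{\alpha,\beta}(z,u,w)\,dA(u),
\]
and split it as $\mathcal C_{\alpha,\beta}=\mathbf 1_{\{|z|<N\}}\mathcal C_{\alpha,\beta}+\mathbf 1_{\{|z|\ge N\}}\mathcal C_{\alpha,\beta}$. I will show that the first piece is compact for every $N$ and that the second piece has operator norm tending to $0$ as $N\to\infty$. Compactness of $\mathcal C_{\alpha,\beta}$ then follows because compact operators are norm-closed.

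\emph{Step 1: a global bound.} Integrating \eqref{l32} in $u$ gives $|C_{\alpha,\beta}(z,w)|\lesssim e^{-c'd_\alpha(z,w)^2}/(\tau_\alpha(z)\tau_\alpha(w))$. To see this, use the triangle inequality $d_\alpha(z,u)^2+d_\alpha(u,w)^2\ge d_\alpha(z,w)^2/2$ to pull out a factor $e^{-(c/2)d_\alpha(z,w)^2}$. The remaining integral $\int e^{-(c/2)d_\alpha(z,u)^2}\tau_\alpha(u)^{-2}\,dA(u)$ is bounded by the annular decomposition of Lemma~\ref{lemij}.

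\emph{Step 2: the bounded region.} From the global bound, $\int_{|z|<N}\int_{\C}|C_{\alpha,\beta}(z,w)|^2\,dA(w)\,dA(z)<\infty$. Here the inner integral is $\lesssim \tau_\alpha(z)^{-2}$ times a quantity bounded by the argument of Lemma~\ref{lemij}, and $\tau_\alpha$ is bounded below on compacts. So the truncated operator is Hilbert--Schmidt, hence compact.

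\emph{Step 3: the tail, which is the main obstacle.} Here I need genuine cancellation. Specifically, I want to show that
\[
|C_{\alpha,\beta}(z,w)|\le \varepsilon(|z|)\,\frac{e^{-c''d_\alpha(z,w)^2}}{\tau_\alpha(z)\tau_\alpha(w)},\qquad \varepsilon(r)\to0 \text{ as } r\to\infty.
\]
Once this holds, the Schur test together with Lemma~\ref{lemij} gives $\|\mathbf 1_{\{|z|\ge N\}}\mathcal C_{\alpha,\beta}\|\lesssim \sup_{r\ge N}\varepsilon(r)\to0$. For the column integrals I use the symmetric bound $d_\alpha(z,w)$ large unless $|z|\asymp|w|$.

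To prove the estimate, fix a large $R$ and a large $|z|$.

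First, if $d_\alpha(z,w)>R/2$, I simply keep the bound from Step~1. Its extra factor $e^{-c'd^2/2}\le e^{-c'R^2/8}$ is small, and the remaining factor $e^{-c'd^2/2}$ is retained.

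Second, if $d_\alpha(z,w)\le R/2$, I split the $u$-integral into $u\in D_\alpha(z,R)$ and its complement. On the complement, \eqref{l32} gives a contribution $\lesssim e^{-cR^2/4}\tau_\alpha(z)^{-2}$.

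On $D_\alpha(z,R)$ I insert the expansion \eqref{t5}. The error term $O(e^{-c|z|^m})$, integrated over a ball of area $\lesssim R^2|z|^{2-m}$, is negligible. For the main term I change variables $\tilde u=u^{m/2}$, using a branch that is holomorphic and injective near $z$ for $|z|$ large. Since $dA(\tilde u)=\tfrac{m^2}{4}|u|^{m-2}dA(u)$, the factor $|u|^{m-2}$ is absorbed, and what remains is $|w|^{m-2}$ times
\[
\int\Bigl(e^{-\alpha|\tilde z-\tilde u|^2-\beta|\tilde u-\tilde w|^2}-e^{-\beta|\tilde z-\tilde u|^2-\alpha|\tilde u-\tilde w|^2}\Bigr)\,dA(\tilde u)
\]
over the image of the ball. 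That image contains a Euclidean disc of radius $\asymp R$ around $\tilde z$.

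Over all of $\C$ this integral vanishes exactly, because of the Gaussian convolution identity
\[
\int_{\C}e^{-\alpha|a-x|^2-\beta|x-b|^2}\,dA(x)=\frac{\pi}{\alpha+\beta}\,e^{-\frac{\alpha\beta}{\alpha+\beta}|a-b|^2},
\]
whose right-hand side is symmetric in $\alpha,\beta$. Hence the local integral equals minus the integral over the complement of the image. Since $|\tilde z-\tilde w|\lesssim R/2$, that complement contributes $O(e^{-cR^2})$, so this part of the kernel is $O(|w|^{m-2}e^{-cR^2})=O(e^{-cR^2}\tau_\alpha(z)^{-2})$.

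Combining the two cases gives the claimed bound with $\varepsilon\lesssim e^{-cR^2}+o_R(1)$. Letting first $|z|\to\infty$ and then $R\to\infty$ gives $\varepsilon(r)\to0$. The delicate points are the uniformity of the $o(1)$ in \eqref{kkk} and \eqref{t5} on $D_\alpha(z,R)$, and the injectivity of $u\mapsto u^{m/2}$ there. The latter holds because $D_\alpha(z,R)$ lies in a sector of angular width $O(R|z|^{-m/2})$.
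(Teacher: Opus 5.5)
Your proposal is correct and is essentially the paper's proof. It uses the same ingredients: the global Gaussian bound $|C_{\alpha,\beta}(z,w)|\lesssim L(z,w)$ from the proof of Lemma~\ref{lem:far}, the near-diagonal cancellation of Lemma~\ref{lem:near-diag} via \eqref{t5}, the substitution $\tilde u=u^{m/2}$ with the $\alpha\leftrightarrow\beta$ symmetry of the Gaussian convolution, the Schur test through Lemma~\ref{lemij}, and a Hilbert--Schmidt piece near the origin. The only difference is that you merge the paper's two tail operators $\mathcal C^M_{\alpha,\beta}$ and $\mathcal C^{R,M}_{\alpha,\beta}$ into the single bound $\varepsilon(|z|)\,e^{-c''d_\alpha(z,w)^2}/(\tau_\alpha(z)\tau_\alpha(w))$.

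That merging needs one bookkeeping remark you omitted. On $\{d_\alpha(z,w)\le R/2\}$, your bound $O(\tau_\alpha(z)^{-2})$ takes the required form only after multiplying by $e^{c''R^2/4}$. So $c''$ must be chosen small compared with the Gaussian tail constants, which is always possible since Lemma~\ref{lemij} holds for every $c>0$. The paper avoids this by keeping the band width $M$ fixed while $|z|\to\infty$.
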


To prove this theorem we need some preparatory results.
Denote by $C_{\alpha,\beta}(z,w)$ the kernel of the integral operator $\mathcal C_{\alpha,\beta}$ acting on $L^2(\C,dA)$. Then we have
\begin{multline*}
C_{\alpha,\beta}(z,w)=\int_{\C}\Bigl(B_\alpha(z,u)B_\beta(u,w)-B_\beta(z,u)B_\alpha(u,w)\Bigr)\,dA(u)\\=\int_{\C}G_{\alpha,\beta}(z,u,w) \,dA(u).
\end{multline*}

Given $\alpha>0$, $c>0$ consider an auxiliary kernel
$$
L(z,w)=\frac{e^{-c\,d_\alpha(z,w)^2}}{\tau_\alpha(z)\tau_\alpha(w)}.
$$

\begin{lemma}\label{lem:K0}
The kernel $L$ defines a bounded integral operator $\mathcal L$ on $L^2(\C,dA)$ and its truncated version
$$
L^{M}(z,w):=\mathbf 1_{\{d_\alpha(z,w)>M\}}L(z,w) 
$$
defines a bounded integral operator $\mathcal L^{M}$ on $L^2(\C,dA)$ satisfying 
\[
\|\mathcal L^{M}\|\to 0, \qquad M\to\infty.
\]
\end{lemma}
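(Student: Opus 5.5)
The plan is to apply the Schur test, in the same form as in the proof of Proposition~\ref{bdd:Ber}, to both $L$ and $L^M$. Since $L(z,w)=L(w,z)$ (because $d_\alpha$ is symmetric), the row and column suprema coincide. It therefore suffices to control
$$
S(M):=\sup_{z\in\C}\int_{\{d_\alpha(z,w)>M\}}\frac{e^{-c\,d_\alpha(z,w)^2}}{\tau_\alpha(z)\tau_\alpha(w)}\,dA(w),
$$
and the Schur test gives $\|\mathcal L^M\|\le S(M)$.

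For $M=0$ this is exactly the quantity bounded in Lemma~\ref{lemij}, so $\mathcal L$ is bounded on $L^2(\C,dA)$. The kernel $L^M$ is nonnegative and pointwise bounded by $L$, so $\mathcal L^M$ is bounded as well, with $\|\mathcal L^M\|\le S(0)$. What remains is to show that $S(M)\to 0$ as $M\to\infty$.

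For this I would repeat the proof of Lemma~\ref{lemij} with the annuli $E_j(z)$ restricted to $j\ge \lfloor M\rfloor$. For $|z|\le 1$ the function $\tau_\alpha$ is bounded above and below near $z$, and the claim follows directly from the tail of the same sum. For $|z|\ge1$ the same splitting at $j=\delta|z|^{m/2}$ gives
$$
\int_{\{d_\alpha(z,w)>M\}}L(z,w)\,dA(w)\lesssim \sum_{j\ge \lfloor M\rfloor,\ j\le\delta|z|^{m/2}} e^{-cj^2}(1+j)^2+|z|^{(m/2)-1}\sum_{j>\max(\lfloor M\rfloor,\delta|z|^{m/2})}e^{-cj^2}j^{4/m}.
$$
The first sum is at most the tail $\sum_{j\ge \lfloor M\rfloor}e^{-cj^2}(1+j)^2$, which is independent of $z$ and tends to $0$.

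The one point needing care is the second sum, because of the growing prefactor $|z|^{(m/2)-1}$. In that range $|z|^{m/2}<j/\delta$, so $|z|^{(m/2)-1}\le (j/\delta)^{(m-2)/m}$, and the second term is bounded by $\sum_{j\ge \lfloor M\rfloor}e^{-cj^2}j^{C}$ with a constant $C$ depending only on $m$. This is again a uniform tail tending to $0$. Hence $S(M)\to0$, and therefore $\|\mathcal L^M\|\to0$.
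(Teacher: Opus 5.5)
Your proposal follows the paper's proof: the Schur test for the symmetric kernel $L^M$, the annular decomposition from Lemma~\ref{lemij} restricted to $j\ge M$, and a tail bound uniform in $z$; your estimate $|z|^{(m/2)-1}\le (j/\delta)^{(m-2)/m}$ for $j>\delta|z|^{m/2}$ supplies the justification of $\sup_z u(z,M)\to0$ that the paper only asserts. One caveat, inherited from the proof of Lemma~\ref{lemij}: your second displayed term tacitly uses $\tau_\alpha(w)\gtrsim 1$, which is false for $m>2$ (on $E_j(z)$ with $j>\delta|z|^{m/2}$, and likewise when $|z|\le 1$, one only has $1/\tau_\alpha(w)\lesssim (1+|w|)^{(m/2)-1}\lesssim (1+j)^{(m-2)/m}$), but this extra polynomial factor is absorbed by $e^{-cj^2}$ through exactly the tail argument you already give.
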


\begin{proof}
By the proof of Proposition \ref{bdd:Ber}, $L$ defines a bounded operator on $L^2(\C,dA)$.
Furthermore, by analogy with the proof of Lemma~\ref{lemij}, we have
\begin{multline*}
\int_{\C} L^{M}(z,w)\,dA(w)
\lesssim  |z|^{(m/2)-1}\times\\ \times
\Bigl(\sum_{M\le j\le \delta|z|^{m/2}}e^{-cj^2}|z|^{(m/2)-1} (1+j)^2|z|^{2-m}+
\sum_{j> \max(M,\delta|z|^{m/2})}e^{-cj^2}j^{4/m}\Bigr)\\ =u(z,M), 
\end{multline*}
with 
$$
\lim_{M\to\infty}\sup_{z\in \mathbb C}u(z,M)=0.
$$
This, together with the Schur test, completes the proof.
\end{proof}

\begin{lemma}[Far-off-diagonal estimate]
\label{lem:far}
Fix $M>0$ and define the kernel 
\[
C^M_{\alpha,\beta}(z,w):=\mathbf 1_{\{d_\alpha(z,w)>M\}}\,C_{\alpha,\beta}(z,w).
\]
Then the corresponding integral operator $\mathcal C^M_{\alpha,\beta}$ satisfies 
\[
\|\mathcal C^M_{\alpha,\beta}\| \to 0,\qquad M\to\infty.
\]
\end{lemma}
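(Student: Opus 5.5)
The idea is to dominate $|C_{\alpha,\beta}(z,w)|$ pointwise by a kernel of the same form as $L$, with a smaller constant $c'>0$ in place of $c$. Once this is done, Lemma~\ref{lem:K0} finishes the argument directly: $|C^M_{\alpha,\beta}(z,w)|\lesssim L^M(z,w)$ with $L$ built from $c'$. The Schur test bounds the norm of an integral operator through absolute values of its kernel, so
\[
\|\mathcal C^M_{\alpha,\beta}\|\lesssim \sup_z\int L^M(z,w)\,dA(w)\to 0 .
\]
(The kernel $L$ is symmetric, so the supremum in the other variable is handled the same way.)

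To get the pointwise bound, start from $C_{\alpha,\beta}(z,w)=\int G_{\alpha,\beta}(z,u,w)\,dA(u)$ and apply \eqref{l32}:
\[
|C_{\alpha,\beta}(z,w)|\lesssim \frac{1}{\tau_\alpha(z)\tau_\alpha(w)}\int_{\C}\frac{e^{-c(d_\alpha(z,u)^2+d_\alpha(u,w)^2)}}{\tau_\alpha(u)^2}\,dA(u).
\]
By the triangle inequality, $d_\alpha(z,w)^2\le 2(d_\alpha(z,u)^2+d_\alpha(u,w)^2)$. Hence the exponential factor is at most
\[
e^{-(c/4)d_\alpha(z,w)^2}\,e^{-(c/2)d_\alpha(z,u)^2}.
\]
This leaves the integral
\[
J(z)=\int_{\C}\tau_\alpha(u)^{-2}e^{-(c/2)d_\alpha(z,u)^2}\,dA(u),
\]
and it suffices to show $\sup_z J(z)<\infty$. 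This yields $|C_{\alpha,\beta}(z,w)|\lesssim L(z,w)$ with constant $c/4$.

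The bound on $J(z)$ is the main step, and I would prove it by the annulus decomposition from Lemma~\ref{lemij}. Split the plane into the sets $E_j(z)$. For $j\le\delta|z|^{m/2}$ we have $\tau_\alpha(u)\asymp\tau_\alpha(z)\asymp|z|^{1-m/2}$ and $|E_j(z)|\lesssim(1+j)^2|z|^{2-m}$. Each such term is therefore $\lesssim e^{-cj^2/2}(1+j)^2$, which is summable. For $j>\delta|z|^{m/2}$ the factor $\tau_\alpha(u)^{-2}$ is no longer bounded, since $\tau_\alpha(u)\to0$ as $|u|\to\infty$. Here I would use $|u|^{m}\lesssim |z|^m+d_\alpha(z,u)^2\lesssim j^2$, which follows from \eqref{tfdist}. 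This gives $\tau_\alpha(u)^{-2}\lesssim j^{2(m-2)/m}$, and with $|E_j(z)|\lesssim j^{4/m}$ the terms are bounded by
\[
e^{-cj^2/2}\,j^{2(m-2)/m+4/m}=e^{-cj^2/2}\,j^{2},
\]
which again sums uniformly in $z$. For $|z|\le1$ the same estimate holds because $\tau_\alpha$ is bounded above and below on compact sets. So $\sup_z J(z)<\infty$.

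The remaining step is routine. The kernel of $\mathcal C^M_{\alpha,\beta}$ is dominated by the truncated kernel $L^M$, and Lemma~\ref{lem:K0} (via the Schur test) gives $\|\mathcal C^M_{\alpha,\beta}\|\to0$ as $M\to\infty$.
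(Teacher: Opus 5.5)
Your proposal is correct and follows the paper's proof step by step. The steps match: the pointwise bound from \eqref{l32}, the splitting $e^{-c(d_\alpha(z,u)^2+d_\alpha(u,w)^2)}\le e^{-(c/4)d_\alpha(z,w)^2}e^{-(c/2)d_\alpha(z,u)^2}$ via the triangle inequality, a uniform bound on $\int_{\C}\tau_\alpha(u)^{-2}e^{-(c/2)d_\alpha(z,u)^2}\,dA(u)$, and domination by $L^M$ combined with Lemma~\ref{lem:K0}. On the far annuli you control $\tau_\alpha(u)^{-2}\lesssim j^{2(m-2)/m}$ through $|u|^m\lesssim |z|^m+d_\alpha(z,u)^2$; this is more careful than the paper's bare appeal to Lemma~\ref{lemij}, which uses $\tau_\alpha\gtrsim 1$ there even though $\tau_\alpha(u)\to 0$ as $|u|\to\infty$ for $m>2$.
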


\begin{proof}

We start with the pointwise domination of the commutator kernel.

Recall that
\[
C_{\alpha,\beta}(z,w)=\int_{\C}G_{\alpha,\beta}(z,u,w)\,dA(u)
\]
so that by \eqref{l32} we have 
$$
|C_{\alpha,\beta}(z,w)|\lesssim \frac{1}{\tau_\alpha(z)\tau_\alpha(w)}
\int_{\C}
\frac{e^{-c(d_\alpha(z,u)^2+d_\alpha(u,w)^2)}}{\tau_\alpha(u)^2}
\,dA(u).
$$

By the triangle inequality we have 
$$
d_\alpha(z,u)^2+d_\alpha(u,w)^2 \ge \tfrac12 d_\alpha(z,w)^2,
$$
and hence, 
\[
e^{-c(d_\alpha(z,u)^2+d_\alpha(u,w)^2}
\le
e^{-\frac c4 d_\alpha(z,w)^2-\frac c4d_\alpha(z,u)^2}.
\]

Arguing as in the proof of Lemma~\ref{lemij}, we obtain that 
$$
\sup_{z\in\mathbb C}\int_{\C}
\frac{e^{-(c/4)d_\alpha(z,u)^2}}{\tau_\alpha(u)^2}
\,dA(u)<\infty.
$$

Therefore,
\[
|C_{\alpha,\beta}(z,w)|\lesssim 
\frac{e^{-(c/4) d_\alpha(z,w)^2}}{\tau_\alpha(z)\tau_\alpha(w)}=L(z,w),
\]
(with a different $c$). 

Therefore,
\[
|C^M_{\alpha,\beta}(z,w)|\lesssim\mathbf 1_{\{d(z,w)>M\}} L(z,w)=L^{M}(z,w).
\]
so that by Lemma~\ref{lem:K0} we obtain 
\[
\|\mathcal C^M_{\alpha,\beta}\|\to 0,\qquad M\to\infty.
\]
\end{proof}

The next lemma provides the near diagonal cancellation at large distance from the origin.

\begin{lemma}\label{lem:near-diag}
Given $M>0$, we have 
\[
\sup_{w\in D_\alpha(z,M)}|C_{\alpha,\beta}(z,w)|
=o(\tau_\alpha(z)^{-2}),\qquad |z|\to\infty.
\]
\end{lemma}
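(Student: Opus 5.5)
Fix $M>0$ and a large auxiliary radius $R\gg M$. I will split
\[
C_{\alpha,\beta}(z,w)=\int_{D_\alpha(z,R)}G_{\alpha,\beta}(z,u,w)\,dA(u)+\int_{\C\setminus D_\alpha(z,R)}G_{\alpha,\beta}(z,u,w)\,dA(u)=I_R+II_R ,
\]
with $w\in D_\alpha(z,M)$. For $II_R$ I use \eqref{l32}: since $d_\alpha(z,u)\ge R$, and $\tau_\alpha(w)\asymp\tau_\alpha(z)$ for $w\in D_\alpha(z,M)$ and $|z|$ large, the argument of Lemma~\ref{lemij} (the annuli $E_j(z)$ with $j\ge R$) gives
\[
|II_R|\lesssim \tau_\alpha(z)^{-2}\int_{d_\alpha(z,u)\ge R}\frac{e^{-c\,d_\alpha(z,u)^2}}{\tau_\alpha(u)^2}\,dA(u)\le \varepsilon(R)\,\tau_\alpha(z)^{-2},
\]
where $\varepsilon(R)\to0$ as $R\to\infty$, uniformly in $z$. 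This is the same tail estimate as in Lemma~\ref{lem:K0}.

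For $I_R$ I use the local expansion \eqref{t5}, which is valid for $u,w\in D_\alpha(z,R)$ once $R\ge M$. The error term integrates to $O(e^{-c|z|^m}|D_\alpha(z,R)|)=o(\tau_\alpha(z)^{-2})$. For the main term I change variables $\tilde u=u^{m/2}$, which is locally biholomorphic for $|u|$ large. Then $dA(u)=|\tfrac{m}{2}|^{-2}|u|^{2-m}dA(\tilde u)$, and the factor $|uw|^{m-2}$ becomes $\asymp |z|^{m-2}\asymp\tau_\alpha(z)^{-2}$ up to a factor $1+o(1)$, since $|u|/|z|,|w|/|z|\to1$ on $D_\alpha(z,R)$. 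In the $\tilde u$-variable the ball $D_\alpha(z,R)$ corresponds, up to constants, to a disc of radius $\asymp R$ about $\tilde z$, by \eqref{tfdist}: $d_\alpha(z,u)\asymp|z-u||z|^{m/2-1}\asymp|\tilde z-\tilde u|$. The main term is therefore
\[
C\,|z|^{m-2}(1+o(1))\int_{\Omega_R}\Bigl(e^{-\alpha|\tilde z-\zeta|^2-\beta|\zeta-\tilde w|^2}-e^{-\beta|\tilde z-\zeta|^2-\alpha|\zeta-\tilde w|^2}\Bigr)dA(\zeta).
\]
Over all of $\C$ the bracket integrates to exactly zero. Both Gaussian convolutions equal $\frac{\pi}{\alpha+\beta}e^{-\frac{\alpha\beta}{\alpha+\beta}|\tilde z-\tilde w|^2}$, which is symmetric in $\alpha,\beta$; this is the Fock-space commutativity at $m=2$. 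Replacing $\Omega_R$ by $\C$ costs a Gaussian tail $\lesssim e^{-cR^2}$, uniformly in $|\tilde z-\tilde w|\lesssim M$.

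The $1+o(1)$ factor needs care, since it multiplies an integrand that is not signed. I handle it by bounding $\bigl||uw|^{m-2}-|zw|^{m-2}\bigr|\le o(1)|z|^{m-2}$ uniformly on $D_\alpha(z,R)$, and similarly for the Jacobian. The discrepancy is then $o(1)|z|^{m-2}\int_{\Omega_R}(\ldots)$, and the absolute integral is $O(1)$. After this the exact cancellation applies to the frozen main term.

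Collecting the pieces gives $\limsup_{|z|\to\infty}\tau_\alpha(z)^2\sup_{w\in D_\alpha(z,M)}|C_{\alpha,\beta}(z,w)|\lesssim\varepsilon(R)+e^{-cR^2}$. Letting $R\to\infty$ proves the lemma. I expect the main obstacle to be the change of variables step: showing that $u\mapsto u^{m/2}$ maps $D_\alpha(z,R)$ onto a set containing and contained in discs of radius $\asymp R$ about $\tilde z$, and that the branch of $u^{m/2}$ is consistent with the one in \eqref{t5}, uniformly in $z$. For this I will use \eqref{tfdist} together with the fact that, for $|u-z|\lesssim R|z|^{1-m/2}\ll|z|$, a fixed branch of $u^{m/2}$ near $z$ is injective with derivative $\frac m2 u^{m/2-1}=\frac m2 z^{m/2-1}(1+o(1))$.
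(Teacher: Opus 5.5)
Your proposal is correct and follows the paper's own argument: the same near/far splitting, the Gaussian tail from \eqref{l32} for the far part, and for the near part the expansion \eqref{t5}, the substitution $\tilde u=u^{m/2}$, and the exact vanishing of the whole-plane Gaussian integral. One simplification: $dA(\tilde u)=\frac{m^2}{4}|u|^{m-2}\,dA(u)$, so the factor $|u|^{m-2}$ in \eqref{t5} cancels the Jacobian exactly and leaves the constant $|w|^{m-2}$ outside the integral, which makes your $1+o(1)$ bookkeeping for the unsigned integrand unnecessary.
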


\begin{proof} Let $w\in D_\alpha(z,M)$ and let $R>5M$. By \eqref{l32} we have 
\begin{multline*}
|C_{\alpha,\beta}(z,w)|\lesssim \Bigl| \int_{|u^{m/2}-z^{m/2}|\le R} G_{\alpha,\beta}(z,u,w)\,dA(u)\Bigr| \\+\int_{\mathbb C\setminus D_\alpha(z,R/2)}\frac{e^{-c\, d_\alpha(z,u)^2}}{\tau_\alpha(z)^2\tau_\alpha(u)^2}\,dA(u)=I_1+I_2.
\end{multline*}

By the proof of Lemma~\ref{lem:K0},
\begin{equation}
I_2\le \frac{\delta(R)}{\tau_\alpha(z)^2},\qquad |z|>C(R),
\label{t6}
\end{equation}
with $\delta(R)\to 0$ as $R\to\infty$.

By \eqref{t5}, we have
\begin{multline*}
I_1=C|w|^{m-2}\Bigl| \int_{|u^{m/2}-z^{m/2}|\le R} |u|^{m-2} \Bigl(e^{-\alpha|\tilde z-\tilde u|^2-\beta|\tilde u-\tilde w|^2}\\-e^{-\beta|\tilde z-\tilde u|^2-\alpha|\tilde u-\tilde w|^2}\Bigr)\,dA(u)\Bigr|+O(e^{-c|z^m|}).
\end{multline*} 
Passing to the variable $\tilde u=u^{m/2}$, we obtain 
\begin{multline*}
I_1=C_1|w|^{m-2}\Bigl| \int_{|\tilde u-\tilde z|\le R}  \Bigl(e^{-\alpha|\tilde z-\tilde u|^2-\beta|\tilde u-\tilde w|^2}-e^{-\beta|\tilde z-\tilde u|^2-\alpha|\tilde u-\tilde w|^2}\Bigr)\,dA(\tilde u)\Bigr|\\+O(e^{-c|z^m|}).
\end{multline*} 
 
Since
$$
 \int_{\mathbb C}  \Bigl(e^{-\alpha|\tilde z-\tilde u|^2-\beta|\tilde u-\tilde w|^2}-e^{-\beta|\tilde z-\tilde u|^2-\alpha|\tilde u-\tilde w|^2}\Bigr)\,dA(\tilde u)=0,
$$
we conclude that 
$$
I_2\le \frac{\delta(R)}{\tau_\alpha(z)^2},\qquad |z|>C(R),
$$
with $\delta(R)\to 0$ as $R\to\infty$.

Together with \eqref{t6} this completes the proof. 
\end{proof}

Set
\[
C^{R,M}_{\alpha,\beta}(z,w)=\mathbf 1_{\{d(z,w)\le M,\, |z|\ge R\}}\,C_{\alpha,\beta}(z,w),
\]
and consider the corresponding integral operator $\mathcal C^{R,M}_{\alpha,\beta}$.

By Lemma~\ref{lem:near-diag}, we have
\[
|C^{R,M}_{\alpha,\beta}(z,w)|= o(L(z,w)), \qquad R\to\infty.
\]
Therefore, for fixed $M$
\begin{equation}
\|\mathcal C^{R,M}_{\alpha,\beta}\|=o(1), \qquad R\to\infty.
\label{tl}
\end{equation}

\begin{lemma}\label{lem:int} The integral operator $\mathcal C^{{\rm int},R,M}_{\alpha,\beta}$ defined by the kernel 
\[
C^{{\rm int},R,M}_{\alpha,\beta}(z,w)=\mathbf 1_{\{d(z,w)\le M,\, |z|<R\}}\,C(z,w)
\]
is a Hilbert--Schmidt operator on $L^2(\C, dA)$. 
\end{lemma}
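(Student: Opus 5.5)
The plan is to show directly that the kernel is square integrable on $\C\times\C$, that is,
\[
\int_{\C}\int_{\C}\bigl|C^{{\rm int},R,M}_{\alpha,\beta}(z,w)\bigr|^2\,dA(w)\,dA(z)<\infty .
\]
The support of the kernel is bounded. Indeed, if $|z|<R$ and $d_\alpha(z,w)\le M$, then $|w|$ is bounded by a constant depending only on $R$, $M$, $m$ and $\alpha$. To see this, note that for $|w|\ge 2\max(R,1)$ we have $\bigl||z|-|w|\bigr|\ge |w|/2$. By \eqref{tfdist} (or, for $|z|<1$, by the estimate $d_\alpha(z,w)^2\asymp |w|^m$ used in the proof of Lemma~\ref{kerlem}), this gives $d_\alpha(z,w)\gtrsim |w|^{m/2}$. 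Hence $|w|\le R'(R,M)$, and the support lies in the compact set $\{|z|<R\}\times\{|w|\le R'\}$.

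It then suffices to show that $C_{\alpha,\beta}$ is bounded on this compact set. For this I use the pointwise domination already established in the proof of Lemma~\ref{lem:far}:
\[
|C_{\alpha,\beta}(z,w)|\lesssim \frac{e^{-c\,d_\alpha(z,w)^2}}{\tau_\alpha(z)\tau_\alpha(w)}=L(z,w).
\]
This domination came from \eqref{l32} and the uniform bound on $\int e^{-(c/4)d_\alpha(z,u)^2}\tau_\alpha(u)^{-2}\,dA(u)$. The function $\tau_\alpha$ is continuous and strictly positive for the smoothed weight. In particular, on the compact set $\{|\zeta|\le R'\}$ it is bounded below by a positive constant, so $\tau_\alpha^{-1}$ is bounded there. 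Consequently $|C^{{\rm int},R,M}_{\alpha,\beta}|\le C(R,M)\,\mathbf 1_{\{|z|<R,\,|w|\le R'\}}$, and this function is in $L^2(\C\times\C)$ because the set has finite measure. Hence the operator is Hilbert--Schmidt.

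The only point needing care is the lower bound on $\tau_\alpha$ near the origin. The formula $\tau_\alpha(z)=\frac{2}{m\sqrt\alpha}|z|^{1-m/2}$ is stated only for $|z|\ge1$. For $|z|<1$ I rely on the convention that $\phi_\alpha$ has been smoothed to be strictly subharmonic, so that $\partial\bar\partial\phi_\alpha$ is bounded and positive on compact sets. If one prefers to avoid this, there is an alternative: bound $C_{\alpha,\beta}$ directly by $2\int B_\alpha(z,u)B_\beta(u,w)\,dA(u)$. Then use the crude estimate \eqref{k1} together with $K_\alpha(z,z)\ge \Gamma(2/m)/\Gamma(2/m)=1$, which follows from the $n=0$ term of the positive series. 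Together these give local boundedness of $B_\alpha(z,\cdot)$, uniformly for $|z|\le R$. The integrability of $\int_{\C} B_\beta(u,w)\,dA(u)$ then follows from the Schur estimate in Proposition~\ref{bdd:Ber}.
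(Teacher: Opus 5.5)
Your argument is correct and follows the same route as the paper: the kernel is bounded and supported in a bounded subset of $\C\times\C$, hence square integrable. The paper gets local boundedness from continuity of $C_{\alpha,\beta}$ and simply asserts that the support is compact, whereas you derive boundedness from the domination $|C_{\alpha,\beta}|\lesssim L$ proved in Lemma~\ref{lem:far} and check boundedness of the support via \eqref{tfdist}; in your optional alternative, $\int B_\alpha(z,u)B_\beta(u,w)\,dA(u)$ and $\int B_\beta(z,u)B_\alpha(u,w)\,dA(u)$ are different, so you should bound each term separately rather than use twice the first.
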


\begin{proof} Since the kernel $C_{\alpha,\beta}$ is continuous and the support of $C^{{\rm int},R,M}_{\alpha,\beta}$ is compact, 
this latter kernel is square
integrable. Therefore $\mathcal C^{{\rm int},R,M}_{\alpha,\beta}$ is a Hilbert--Schmidt operator.
\end{proof}

\begin{proof}[Proof of Theorem \ref{thm:compactness}]
Let $\varepsilon>0$. By Lemma~\ref{lem:far} we can find $M$ such that $\|\mathcal C^M_{\alpha,\beta}\|\le \varepsilon$. 
Next, by \eqref{tl}, we can find $R$ such that $\|\mathcal C^{R,M}_{\alpha,\beta}\|\le \varepsilon$. Since 
\[
\mathcal C_{\alpha,\beta}=\mathcal C^M_{\alpha,\beta}+\mathcal C^{R,M}_{\alpha,\beta}+\mathcal C^{{\rm int},R,M}_{\alpha,\beta},
\]
and the latter operator is compact, we obtain that $C$ is a norm limit of compact operators,
hence compact.
\end{proof}


\section{Finite rank property of the Berezin commutator}

We now show that, in the  case \(m\not= 2\), the commutator is not
of finite rank whenever \(\alpha\neq\beta\). The argument uses the Taylor series expansion of the Berezin kernel, the fact that the Berezin tranform preserves the angular mode: 
if $f(z)=z^Nf_1(|z|)$, then $\mathcal B_\alpha f=g$ with $g(z)=z^Ng_1(|z|)$ for some $g_1$, and some results from \cite{BFY}.

\begin{thm}\label{thm:finite-rank}
Let $m\not= 2$, $\alpha, \beta>0$.  If  $\alpha\not = \beta$, then the commutator $\mathcal C_{\alpha,\beta}$ is not of finite rank on the space
$L^2(\mathbb C,dA)$. 
\end{thm}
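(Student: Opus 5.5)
The proof will exploit rotation invariance. Since $B_\alpha(e^{i\theta}z,e^{i\theta}w)=B_\alpha(z,w)$, both $\mathcal B_\alpha$ and $\mathcal B_\beta$ commute with rotations. Hence they preserve each angular-mode subspace
$$
H_N=\{f\in L^2(\mathbb C,dA): f(z)=z^Nf_1(|z|)\},\qquad N\in\mathbb Z
$$
(with $\bar z^{|N|}$ in place of $z^N$ for $N<0$). The space $L^2(\mathbb C,dA)$ is the orthogonal sum of the $H_N$, and the commutator $\mathcal C_{\alpha,\beta}$ is block diagonal with blocks $\mathcal C_{\alpha,\beta}|_{H_N}$. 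The rank of a block-diagonal operator is the sum of the ranks of its blocks. So it suffices to show that $\mathcal C_{\alpha,\beta}|_{H_N}\neq0$ for infinitely many $N$.

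To compute the blocks, expand $|K_\alpha(z,w)|^2$ as a double power series:
$$
|K_\alpha(z,w)|^2=\Gamma(2/m)^2\sum_{j,k}\frac{\alpha^{2(j+k)/m}z^j\bar z^k\bar w^jw^k}{\Gamma(\frac{2j+2}m)\Gamma(\frac{2k+2}m)}.
$$
Integrating against $w^N f_1(|w|)$ keeps only the terms with $j=k+N$. This makes $\mathcal B_\alpha|_{H_N}$ an explicit integral operator in the radial variable, whose kernel is a series in $(|z||w|)^{2k}$ divided by $K_\alpha(z,z)$. The Mellin transform is a natural tool here. Test functions of the form $f(w)=w^N e^{-s|w|^m}$, or monomials times the weight, make $\mathcal B_\alpha f$ computable. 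Alternatively, I would use the results from \cite{BFY}, which I expect to describe the joint spectral behaviour of $\mathcal B_\alpha$ on radial modes and to contain the statement that $[\mathcal B_\alpha,\mathcal B_\beta]=0$ only if $\alpha=\beta$ or $m=2$. If \cite{BFY} proves the nonvanishing of the commutator on a single mode, I would adapt that argument to an arbitrary mode $N$.

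The key step is to show that the nonvanishing persists for infinitely many $N$. Two routes seem possible.

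\emph{Analytic dependence on $N$.} Fix a test function such as $f_N(w)=w^N e^{-\gamma|w|^m}$ and consider the quantity $\langle \mathcal C_{\alpha,\beta}f_N,g_N\rangle$. By the Taylor expansion, this quantity is given by Gamma-function expressions that extend analytically in $N$. If the commutator vanished on all but finitely many modes, the expression would vanish for all large integers $N$. A Carlson-type theorem, using growth bounds coming from Stirling's formula, would then force it to vanish identically, including at modes where nonvanishing is known from \cite{BFY}.

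\emph{Asymptotics as $N\to\infty$.} Compute the leading Stirling asymptotics of the pairing as $N\to\infty$. The factor $\Gamma((2j+2)/m)$ produces $\alpha$- and $\beta$-dependent exponential rates in $N$, and these rates differ exactly when $m\neq2$ and $\alpha\neq\beta$. Showing that the leading terms of $\mathcal B_\alpha\mathcal B_\beta$ and $\mathcal B_\beta\mathcal B_\alpha$ differ would give nonvanishing for all large $N$.

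I expect the main obstacle to be this uniform-in-$N$ nonvanishing. The structure needed for the decomposition is routine: $\mathcal C_{\alpha,\beta}$ is bounded by Proposition \ref{bdd:Ber}, and the $H_N$ are reducing subspaces. The difficulty is the asymptotics themselves, because the nonzero term is a subleading correction: at leading order both Berezin transforms act like the identity, so the relevant correction must be extracted carefully. I would first try the analytic-continuation route, since it only needs nonvanishing at a single mode plus growth bounds, rather than precise asymptotics.
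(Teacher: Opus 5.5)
\textbf{There is a genuine gap: you never prove that infinitely many blocks are nonzero.}

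Your reduction is correct. Rotation invariance of $B_\alpha$ makes $\mathcal C_{\alpha,\beta}$ block diagonal over the angular modes $H_N$. The ranges of the blocks are mutually orthogonal, so finite rank forces $\mathcal C_{\alpha,\beta}|_{H_N}=0$ for all $|N|\ge N_0$. This is actually a cleaner way to turn the finite-rank hypothesis into vanishing on test functions than the paper's step. The paper asserts that \eqref{kfin} holds for all but finitely many $\delta$ at the prescribed modes $N=0,1$. Finite rank alone does not give that, since a nonzero finite-rank block at a small mode may be nonzero on every $F_{N,\delta}$.

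However, your proposal stops exactly where the substance lies, and neither of your routes is carried out. The Carlson route would need two things you do not supply:
\begin{itemize}
\item an analytic continuation in $N$ of a pairing built from Gamma quotients and the integrals $V_{\alpha,\beta}(k+N)$, with exponential type $<\pi$ on vertical lines;
\item a specific pairing known to be nonzero at a specific mode. The input from \cite{BFY} gives this only at $N=0$ and only for $m\notin 2\mathbb Z_+$; for $m=4,6,\dots$ even the paper has to combine two modes.
\end{itemize}
The Stirling route rests on an unsupported claim that the exponential rates in $N$ differ precisely when $m\neq 2$, $\alpha\ne\beta$. As you note yourself, the effect is subleading.

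\textbf{The missing idea: fix $N$ and let $\delta\to\infty$, rather than letting $N\to\infty$.} Take $F_{N,\delta}(z)=z^Ne^{-\delta|z|^m}$. Then $\mathcal B_\beta F_{N,\delta}$ is explicit, and so is $\lim_{z\to0}z^{-N}(\mathcal B_\alpha\mathcal B_\beta F_{N,\delta})(z)$. Hence $\mathcal C_{\alpha,\beta}F_{N,\delta}=0$ yields \eqref{kfin}. If the $N$-th block vanishes, \eqref{kfin} holds for every $\delta>0$. Expand both sides in powers $\delta^{-2k/m-j}$, $k,j\ge 0$; the series converge geometrically for large $\delta$.
\begin{itemize}
\item The $\delta^{0}$ term gives $V_{\beta,\alpha}(N)=V_{\alpha,\beta}(N)$.
\item For $m>2$ we have $0<2/m<1$, so the $\delta^{-2/m}$ term comes only from $k=1$. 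It gives $\beta^{4/m}V_{\beta,\alpha}(N+1)=\alpha^{4/m}V_{\alpha,\beta}(N+1)$.
\item Suppose blocks $N$ and $N+1$ both vanish. Then the $\delta^0$ identity at mode $N+1$ gives $V_{\beta,\alpha}(N+1)=V_{\alpha,\beta}(N+1)>0$. Together with the previous identity this forces $\alpha=\beta$.
\end{itemize}
So for $m>2$ no two consecutive blocks vanish, which contradicts vanishing for all $N\ge N_0$. For $m<2$, the $\delta^{-1}$ term comes only from the prefactor $(1+\beta/\delta)^{-(2N+2)/m}$. It gives $\beta V_{\beta,\alpha}(N)=\alpha V_{\alpha,\beta}(N)$, which already contradicts the $\delta^0$ identity on a single block (compare Lemma~\ref{lastlem}(ii), which is this statement at $N=0$). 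Run at modes $N\ge N_0$, this argument combined with your block reduction gives a complete proof.
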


\begin{proof} As in \cite{BFY} we start by defining the Stieltjes moments
$$
s_{n,\alpha}= \alpha^{-2n/m}\Gamma\Bigl(\frac{2n+2}m\Bigr)=\frac{m\alpha^{2/m}}{2\pi}\int_{\mathbb C} |z|^{2n}  
 e^{-\alpha |z|^m}\,dA(z),\quad n\ge 0,
$$
and an entire function
$$
H_\alpha(z)= \sum_{n\ge0}\frac{z^n}{s_{n,\alpha}}. 
$$
We have $H_\alpha(0)=s_{0,\alpha}^{-1}=\Gamma(2/m)^{-1}$.

Now, 
$$
K_{\alpha}(z,w)=\Gamma(2/m)H_\alpha(z\bar w),
$$
and, hence,
$$
K_{\alpha}(z,z)=\Gamma(2/m)H_\alpha(|z|^2).
$$

Consequently, the Berezin transform $\mathcal B_{\alpha}$ can be expressed as 
\begin{multline*}
(\mathcal B_{\alpha}f)(z)=\varkappa_{m,\alpha}\int_{\mathbb C}f(w)\frac{|K_{\alpha}(w,z)|^2}{K_{\alpha}(z,z)}e^{-\alpha|w|^m}\,dA(w)\\
 = \frac{m\alpha^{2/m}}{2\pi H_{\alpha}(|z|^2)}  \int_{\mathbb C}f(w) |H_{\alpha}(z\overline{w})|^2
 e^{-\alpha |w|^m}\,dA(w).
\end{multline*}

Let $N\in \mathbb Z_+$, and let $f(z)=z^Ng(|z|)$. Then 
\begin{multline*}
(\mathcal B_{\alpha}f)(z)\\= \frac{m\alpha^{2/m}}{2\pi H_{\alpha}(|z|^2)} \sum_{j,k\ge 0} \frac1{s_{j,\alpha}s_{k,\alpha}}\int_{\mathbb C}  (z\bar w)^j (\bar zw)^k w^Ng(|w|)
 e^{-\alpha |w|^m}\,dA(w)\\=z^N\frac{m\alpha^{2/m}}{2\pi H_{\alpha}(|z|^2)} \sum_{k\ge 0} \frac{|z|^{2k}}{s_{k,\alpha}s_{k+N,\alpha}}\int_{\mathbb C} |w|^{2k+2N}  g(|w|)
 e^{-\alpha |w|^m}\,dA(w).
\end{multline*}

Next,
$$
\lim_{z\to 0}\frac{(\mathcal B_{\alpha}f)(z)}{z^N}=\frac{m\alpha^{2/m}}{2\pi  s_{N,\alpha}}    \int_{\mathbb C} |w|^{2N}  g(|w|)
 e^{-\alpha |w|^m}\,dA(w).
$$

Now, let $F_{N,\delta}(z)=z^Ne^{-\delta |z|^m}$. Since 
\begin{multline*}
\int_{\mathbb C} |\zeta|^{2k+2N}  
 e^{-(\alpha+\delta) |\zeta|^m}\,dA(\zeta)=\\ \frac{2\pi}{m}(\alpha+\delta)^{-(2k+2N+2)/m}\Gamma((2k+2N+2)/m)\\=\frac{2\pi}{m}
 \frac{\alpha^{(2k+2N)/m}s_{k+N,\alpha}}{(\alpha+\delta)^{(2k+2N+2)/m}},
\end{multline*}
we have
\begin{multline*}
(\mathcal B_{\alpha}F_{N,\delta})(z)\\=z^N\frac{m\alpha^{2/m}}{2\pi H_{\alpha}(|z|^2)} \sum_{k\ge 0} \frac{|z|^{2k}}{s_{k,\alpha}s_{k+N,\alpha} }\int_{\mathbb C} |w|^{2k+2N}   
 e^{-(\alpha+\delta) |w|^m}\,dA(w)\\=\frac{z^N}{H_{\alpha}(|z|^2)} \sum_{k\ge 0} \Bigl(\frac{\alpha}{\alpha+\delta}\Bigr)^{(2k+2N+2)/m}\cdot\frac{|z|^{2k}}{s_{k,\alpha}}.
 \end{multline*}

Therefore, 
\begin{multline*}
\lim_{z\to 0}\frac{(\mathcal B_{\alpha}\mathcal B_{\beta}F_{N,\delta})(z)}{z^N}=\frac{m\alpha^{2/m}}{2\pi  s_{N,\alpha}}   
\int_{\mathbb C} |w|^{2N} \frac{(\mathcal B_{\beta}F_{N,\delta})(w)}{w^N}  e^{-\alpha |w|^m}\,dA(w)\\=
\frac{m\alpha^{2/m}}{2\pi  s_{N,\alpha}}   
\sum_{k\ge 0} \Bigl(\frac{\beta}{\beta+\delta}\Bigr)^{(2k+2N+2)/m} \cdot \frac{1}{s_{k,\beta}}\int_{\mathbb C} \frac{|w|^{2k+2N}}{H_{\beta}(|w|^2)}   e^{-\alpha |w|^m}\,dA(w) \\=
\frac{m(\alpha/\beta)^{(2N+2)/m}}{2\pi \Gamma((2N+2)/m)}   
\sum_{k\ge 0} \Bigl(\frac{\beta^2}{\beta+\delta}\Bigr)^{(2k+2N+2)/m}  
\times \\ \times\frac{1}{\Gamma((2k+2)/m)}\int_{\mathbb C} \frac{|w|^{2k+2N}}{H_{\beta}(|w|^2)}   e^{-\alpha |w|^m}\,dA(w).
\end{multline*}

Now we set 
\begin{align*}
V_{\alpha,\beta}(k)&=\frac{1}{\Gamma(\frac{2k+2}{m})} \int_{\mathbb C}  \frac{  |z|^{2k} e^{-  \beta |z|^m}  }{ H_{\alpha}(|z|^2)}\,dA(z),\\
V_{\beta,\alpha}(k)&=\frac{1}{\Gamma(\frac{2k+2}{m})} \int_{\mathbb C}  \frac{  |z|^{2k} e^{-  \alpha |z|^m}  }{ H_{\beta}(|z|^2)}\,dA(z),
\end{align*}
and 
$$
P_{m,k,N}=\frac{\Gamma((2k+2N+2)/m)}{\Gamma((2N+2)/m)\Gamma((2k+2)/m)}.
$$

If $\mathcal C_{\alpha,\beta}F_{N,\delta}=0$, then 
$$
\lim_{z\to 0}\frac{(\mathcal B_{\alpha}\mathcal B_{\beta}F_{N,\delta})(z)}{z^N}=\lim_{z\to 0}\frac{(\mathcal B_{\beta}\mathcal B_{\alpha}F_{N,\delta})(z)}{z^N},
$$
and hence,
\begin{multline}
\Bigl(\frac{1}{1+\beta/\delta}\Bigr)^{(2N+2)/m}\sum_{k\ge 0}  \Bigl(\frac{\beta^2}{\beta+\delta}\Bigr)^{2k/m} 
P_{m,k,N}V_{\beta,\alpha}(k+N)\\=
\Bigl(\frac{1}{1+\alpha/\delta}\Bigr)^{(2N+2)/m}\sum_{k\ge 0}  \Bigl(\frac{\alpha^2}{\alpha+\delta}\Bigr)^{2k/m} 
P_{m,k,N}V_{\alpha,\beta}(k+N).
\label{kfin}
\end{multline}

Suppose now that $\mathcal C_{\alpha,\beta}$ is of finite rank. 
Then for every $N\ge 0$, and for every $\delta>0$ outside of a finite number of values, we have \eqref{kfin}. 
Passing to the limit $\delta\to\infty$, we obtain 
\begin{equation}
V_{\beta,\alpha}(N)=V_{\alpha,\beta}(N),\qquad N\ge 0. 
\label{eq1}
\end{equation}

Set 
\begin{align*}
U_{\alpha,\beta}(k)&=\alpha^{4k/m}V_{\alpha,\beta}(k),\qquad k\ge 0,\\
U_{\beta,\alpha}(k)&=\beta^{4k/m}V_{\beta,\alpha}(k),\qquad k\ge 0.
\end{align*}

Now, it is time to use some results from \cite{BFY}. 

\begin{lemma}[\cite{BFY}, Lemmata 2.1, 2.2] Let $m,\alpha,\beta>0$. 
\begin{itemize}
\item[(i)] If the commutativity relation 
\begin{equation}
(\mathcal B_\beta \mathcal B_\alpha F_{0,\delta})(0)=(\mathcal B_\alpha \mathcal B_\beta F_{0,\delta})(0)
\label{eq2}
\end{equation}
holds for an infinite sequence of $\delta$, tending to $\infty$, then
$$
U_{\alpha,\beta}(n)=U_{\beta,\alpha}(n),\qquad 0\le n<\frac{m}2.
$$
\item[(ii)]Let $m\not\in 2\mathbb Z_+$, $\alpha\not=\beta$. The commutativity relation \eqref{eq2} cannot hold for any 
infinite sequence of $\delta$, tending to $\infty$.
\end{itemize}
\label{lastlem}
\end{lemma}

By Lemma~\ref{lastlem}~(i), if $m>2$, $\alpha\not=\beta$, and $\mathcal C_{\alpha,\beta}$ is of finite rank, then $U_{\alpha,\beta}(1)=U_{\beta,\alpha}(1)$ which contradicts to 
\eqref{eq1}. By Lemma~\ref{lastlem}~(ii), if $m<2$, $\alpha\not=\beta$, then $\mathcal C_{\alpha,\beta}$ is not of finite rank.

This shows that for $m\not=2$, $\alpha\not=\beta$, the operator $\mathcal C_{\alpha,\beta}$ is not of finite rank.
\end{proof}


\begin{thebibliography}{99}

\bibitem{B}
F.~A.~Berezin,
\textit{Covariant and contravariant symbols of operators},
Math.\ USSR-Izv.\ {\bf 6} (1972), 1117--1151.

\bibitem{BY}
H.~Bommier-Hato, El~H.~Youssfi,
\textit{Hankel operators and the Stieltjes moment problem},
J. Funct.\ Anal.\ {\bf 258} (2010), 978--998.

\bibitem{BFY}
A.~Borichev, G.~Fantolini,  El~H.~Youssfi, \textit{On the commutativity of the Berezin transform}, 
Arch.\ Math.\ (Basel) {\bf 126} (2026), 397--405.

\bibitem{CF}
C.~Castante, J.~F\`abrega, D.~Pascuas,
\textit{Hankel bilinear forms on generalized Fock-Sobolev spaces on $\mathbb C^{n}$},
Ann.\ Acad.\ Sci.\ Fenn.\ Math.\ {\bf 45} (2020), 841--862.

\bibitem{E2}
M.~Engli\v{s},
\textit{Functions invariant under the Berezin transform},
J. Funct.\ Anal.\ {\bf 121} (1994), 233--254.

\bibitem{E1}
M.~Engli\v{s},
\textit{Asymptotics of the Berezin transform and quantization on planar domains},
Duke Math.\ J.\ {\bf 79} (1995), 57--76.

\bibitem{Pod}
I.~Podlubny, \textit{Fractional Differential Equations}, Math. Sci. Engrg., 198
Academic Press, Inc., San Diego, CA, 1999.


\bibitem{SY}
K.~Seip, El.~H.~Youssfi,
\textit{Hankel operators on Fock spaces and related Bergman kernel estimates},
J. Geom.\ Anal.\ {\bf 23} (2013), 170--201.

\bibitem{Tian}
H.~Tian,
\textit{Products of Berezin Transforms on Generalized Fock Spaces}, arXiv 2610.14599.

\bibitem{Z}
K.~Zhu,
\textit{Analysis on Fock spaces},
Springer, 2012.

\end{thebibliography}
\end{document}